\newtheorem{thm}{Theorem}
\newtheorem{lem}{Lemma}
\newtheorem{cor}{Corollary}
\newtheorem*{prop*}{Proposition}
\newtheorem*{thm*}{Theorem}
\newtheorem*{rem*}{Remark}
\newtheorem*{imprem*}{Important Remark}
\newtheorem*{lem*}{Lemma}
\newtheorem*{dfn*}{Definition}
\newtheorem*{cor*}{Corollary}
\newtheorem*{probs*}{Problems}
\newtheorem*{prob*}{Problem}
\newtheorem*{problem*}{Problem}
\newtheorem*{ex*}{Example}
\newtheorem*{conj*}{Conjecture}
\def\PP{\mathbb{P}}
\DeclareMathOperator{\li}{li}
\definecolor{vio}{RGB}{118, 120, 238}
\newcounter{author}
\renewcommand*\author[1]{%
  \stepcounter{author}%
  \ifnum\c@author=1
    \gdef\@author{#1}%
  \else
    \xdef\@author{\unexpanded\expandafter{\@author\and#1}}%
  \fi
  \csgdef{author@\the\c@author}{#1}}
\newcommand*\email[1]{%
  \csgdef{email@\the\c@author}{#1}}
\newcommand*\address[1]{%
  \csgdef{address@\the\c@author}{#1}}
  \xdef\author@count{\the\c@author}%
\newcommand*\print@authors{%
  \ifnum\c@author>\author@count
  \else
    \print@author{\the\c@author}%
    \advance\c@author by 1
    \expandafter\print@authors
  \fi}
\newcommand*\print@author[1]{%
  \par\medskip
  \begin{tabular}{@{}l@{}}%
   
    \csuse{address@#1}\\
    \textit{E-mail address}:
    \href{mailto:\csuse{email@#1}}{\csuse{email@#1}}
  \end{tabular}}
\title{An explicit version of Chen's theorem assuming the Generalized Riemann Hypothesis}
\author{Matteo Bordignon\thanks{ The research was supported by OP RDE project No.
CZ.$02.2.69/0.0/0.0/18\_053/0016976$ International mobility of research,
technical and administrative staff at Charles University.}}
\address{Charles University, Faculty of Mathematics and Physics, Department of Algebra,\\ Sokolovská 83, 186 00 Praha 8, Czech Republic}
\email{matteobordignon91@gmail.com}
\author{Valeriia Starichkova }
\address{ UNSW Canberra, Department of Science, Northcott Drive, Canberra,\\ ACT Australian Capital Territory 2612} \email{v.starichkova@adfa.edu.au}
\begin{document}

\maketitle

\begin{abstract}
We prove that assuming the Generalized Riemann Hypothesis every even integer larger than $\exp(\exp(15.85))$ can be written as the sum of a prime number and a number that has at most two prime factors.
\end{abstract}
 
\section{Introduction}

In \cite{BJS22}, Johnston and the authors of this paper recently built
upon unpublished work of Yamada \cite{Yamada} to prove an effective and explicit variant of Chen’s theorem that holds for a limited set of natural numbers. Namely, \cite[Corollary 4]{BJS22} states the following.
\begin{thm}\label{theo:Chen0}
 Every even integer $N>\exp(\exp(34.5))$ can be represented as the sum of a prime and a square-free number $\eta>1$ with at most two prime factors.
\end{thm}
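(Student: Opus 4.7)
The plan is to execute Chen's sieve-theoretic proof of his theorem in a fully explicit form, following Yamada's unpublished scheme. Set
\[ r(N) := \#\bigl\{ p \leq N : N - p = P_2 \text{ squarefree},\ P_2 > 1 \bigr\}, \]
where $P_2$ denotes any positive integer with at most two prime factors. The goal is to produce an explicit positive lower bound for $r(N)$ whenever $N$ is even and $N > \exp(\exp(34.5))$.

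First, I would set $A = \{N - p : p \leq N,\ p \text{ prime}\}$, take sieving parameters $z_1 \approx N^{1/10}$ and $z_2 \approx N^{1/3}$, and apply Chen's weighted inequality
\[ 2 r(N) \geq S(A; \mathcal{P}, z_1) - \sum_{z_1 \leq p < z_2} S(A_p; \mathcal{P}, z_1) - \tfrac{1}{2}\Omega - O(\sqrt{N}), \]
where $A_p = \{a \in A : p \mid a\}$, $\mathcal{P}$ is the set of all primes, and $\Omega$ is a counting term for certain $P_3$-numbers that must be bounded separately. Each of the sifted quantities is then estimated with the explicit Jurkat--Richert linear sieve: the first from below by $X V(z_1)(f(s) - \epsilon)$ at level $s = \tfrac{1}{2}\log N / \log z_1$, and the $S(A_p; \mathcal{P}, z_1)$ from above by the matching $F$-bound. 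All sieve constants are tracked explicitly.

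Second, the remainder terms in these sieve bounds reduce to averages of $|\pi(N;q,a) - \li(N)/\varphi(q)|$ over moduli $q$ up to essentially $N^{1/2}$ with $(a,q)=1$, and supplying an explicit Bombieri--Vinogradov theorem of this shape is the core analytic task. I would assemble it from an explicit Vinogradov--Korobov-type zero-free region for Dirichlet $L$-functions, an explicit log-free zero-density estimate, and an explicit Deuring--Heilbronn repulsion bound that handles the potential Landau--Siegel zero. The $P_3$-term $\Omega$ is then controlled by Chen's switching trick: one reverses the roles of $p$ and $q_1 q_2 q_3$, viewing the pairs $q_1 q_2 q_3 + p = N$ with $z_1 \leq q_1 < z_2 \leq q_2 \leq q_3$ as a sieve problem for primes $p$ in a new set, which is again bounded above by the explicit linear sieve.

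The main obstacle is the double-exponential size of the threshold. Chen's weighted inequality leaves a very narrow positivity margin after subtracting the upper-bound terms from the lower, so explicit constants must be tracked with great precision through the linear sieve, the switching reduction, the singular series $C(N)$, and the mean value theorem. The threshold $\exp(\exp(34.5))$ essentially reflects the current state of explicit bounds for the exceptional real zero of a Dirichlet $L$-function; eliminating the Siegel zero under GRH is precisely what allows the present paper to replace $34.5$ by $15.85$. The hard part is not the structure of the argument, which is classical, but the numerical bookkeeping that keeps every explicit constant small enough for the final inequality to go through at this threshold.
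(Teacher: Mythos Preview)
The paper does not actually prove this statement: Theorem~\ref{theo:Chen0} is quoted verbatim from \cite[Corollary~4]{BJS22} as background for the conditional improvement that is the subject of the present paper. So there is no proof here to compare your proposal against.

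That said, your outline is a faithful high-level description of the method used in \cite{BJS22} (and, mutatis mutandis, in the present paper under GRH): Chen's weighted inequality reduces $r(N)$ to three sifted quantities; the explicit linear sieve (Theorem~\ref{theo:JR} here) handles $S(A,\mathbb P,z)$ from below and the $S(A_q,\mathbb P,z)$ from above; Chen's switching trick disposes of the $P_3$-term $S(B,\mathbb P,y)$; and the remainders are controlled by an explicit Bombieri--Vinogradov theorem assembled from an explicit zero-free region, a log-free zero-density estimate, and a Deuring--Heilbronn repulsion bound for the potential Siegel zero. Your diagnosis that the threshold $\exp(\exp(34.5))$ is driven by the Siegel-zero machinery, and that GRH is precisely what lets the present paper bring it down to $\exp(\exp(15.85))$, is exactly right. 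A couple of parameter details differ from the actual implementation: the sifting level is $z=N^{1/8}$ rather than $N^{1/10}$, and Chen's inequality (Lemma~\ref{lem:>}) carries a factor $\tfrac12$ on the sum $\sum_{z\le q<y}S(A_q,\mathbb P,z)$, not $1$; with your coefficient the final positivity margin would not close. But the architecture you describe is the one used.
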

It is easy to draw a comparison between this result and Goldbach's weak conjecture, also known as the ternary Goldbach problem.
\begin{thm}[Vinogradov--Helfgott]
For any odd number $N \ge 7$ there exist three primes $p_1, p_2$ and $p_3$, such that
\begin{equation*}
N=p_1+p_2+p_3.
\end{equation*}
\end{thm}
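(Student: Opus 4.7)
The plan is to apply the Hardy--Littlewood circle method. I would introduce the weighted exponential sum
\begin{equation*}
S(\alpha) = \sum_{p \le N} (\log p)\, e(\alpha p), \qquad e(x) := \exp(2\pi i x),
\end{equation*}
so that the weighted representation count
\begin{equation*}
R(N) = \int_0^1 S(\alpha)^3 e(-\alpha N)\, d\alpha
\end{equation*}
detects representations $N = p_1 + p_2 + p_3$. The unit interval is then decomposed into major arcs $\mathfrak{M}$, consisting of small neighborhoods of rationals $a/q$ with $q \le Q$, and minor arcs $\mathfrak{m} = [0,1]\setminus\mathfrak{M}$, with $Q$ chosen as a suitable power of $\log N$.

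On $\mathfrak{M}$ I would approximate $S(a/q+\beta)$ via the prime number theorem in arithmetic progressions and extract the expected main term $\tfrac{1}{2}\mathfrak{S}(N) N^2$, where the singular series
\begin{equation*}
\mathfrak{S}(N) = \prod_{p \mid N}\Bigl(1 - \tfrac{1}{(p-1)^2}\Bigr) \prod_{p \nmid N}\Bigl(1 + \tfrac{1}{(p-1)^3}\Bigr)
\end{equation*}
is bounded below by a positive absolute constant precisely when $N$ is odd (the factor $p=2$ vanishes for even $N$). On $\mathfrak{m}$ the central input is Vinogradov's estimate $\sup_{\alpha \in \mathfrak{m}} |S(\alpha)| \ll N/(\log N)^A$ for arbitrary $A>0$, derivable via Vaughan's identity combined with bilinear sum techniques; pairing this pointwise bound with the Parseval-type inequality $\int_0^1 |S(\alpha)|^2\, d\alpha \ll N \log N$ forces the minor-arc contribution below the main term. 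This yields $R(N) \sim \tfrac{1}{2}\mathfrak{S}(N) N^2$, hence $R(N) > 0$ for all sufficiently large odd $N$, which gives the theorem asymptotically.

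The hard part, and the essence of Helfgott's contribution, is to make every step above effective all the way down to $N = 7$. This requires sharp explicit minor-arc bounds (resting on delicate large-sieve inputs and explicit zero-free regions for Dirichlet $L$-functions of small conductor, themselves established by extensive computation), carefully optimized major-arc expansions with quantitative error terms on every cutoff parameter, and a reduction of the effective threshold to a numerically tractable range. The remaining odd integers below that threshold (around $10^{27}$ in Helfgott's work) are then handled by a rigorous computer verification of the ternary Goldbach property, completing the proof for every odd $N \ge 7$.
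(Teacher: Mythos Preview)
The paper does not prove this theorem at all: it is quoted in the introduction purely as historical context, with the proof attributed to Vinogradov \cite{Vin} for sufficiently large $N$ and to Helfgott \cite{Helf} for the full range $N\ge 7$. There is therefore no in-paper proof to compare your proposal against.

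That said, your sketch is a faithful outline of the standard Hardy--Littlewood circle-method argument as carried out by Vinogradov, and your description of what Helfgott's work adds (fully explicit minor-arc bounds, optimized major arcs, verified $L$-function data, and a finite computer check below a threshold of order $10^{27}$) is accurate at the level of a summary. If anything, one could note that Helfgott's minor-arc treatment uses smoothed sums and a refined Vaughan-type decomposition rather than the classical one, and that the computational closing of the gap relies on the work of Oliveira e Silva, Herzog and Pardi on the binary Goldbach conjecture up to $4\cdot 10^{18}$, but these are refinements, not corrections. Your proposal is a correct high-level account; it simply has no counterpart in this paper.
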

This result was first proved by Vinogradov \cite{Vin} with a non explicit constant and, after different papers in this direction, proved for the conjectured range of natural numbers by Helfgott \cite{Helf}. We are mainly interested in two notable papers. Kaniecki showed in \cite{Kan} that, under the Generalized
Riemann Hypothesis (GRH), every odd integer is a sum of at most five primes. Deshouillers, Effinger,
te Riele and Zinoviev demonstrated in \cite{Des} that, under GRH,
Goldbach's weak conjecture holds for all $N > 7$. Drawing inspiration from these papers we will improve the range of Theorem \ref{theo:Chen0} assuming GRH and reworking its proof. With this aim we will use the new explicit and complete version of the prime number theorem under GRH proven by Ernvall-Hyt\"{o}nen and Paloj\"{a}rvi \cite{ErnvallPalojarvi2020}.
We will now state our main result.
\begin{thm}
\label{Theo:B.}
Let $\pi_2(N)$ denote the number of representations of a given even integer $N$ as the sum of a prime number and a product of at most two prime factors. If $N> \exp(\exp(15.85))$, then assuming GRH we have
\begin{equation*}
\pi_2(N)> 4 \cdot 10^{-4}\frac{U_N N}{\log^2N},
\end{equation*}
where, with $\gamma$ the Euler--Mascheroni constant,
\begin{equation*}
U_N=2 e^{\gamma}\prod_{p>2}\left( 1-\frac{1}{(p-1)^2}\right)\prod_{p>2,p|N}\frac{p-1}{p-2}.
\end{equation*}
\end{thm}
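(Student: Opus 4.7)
The plan is to adapt the weighted-sieve argument from \cite{BJS22} (which itself refines Yamada's unpublished work), substituting the unconditional prime-counting input by the GRH-conditional version from \cite{ErnvallPalojarvi2020}. The central device is Chen's switching principle: rather than attempting to exclude shifted primes $N-p$ with three or more prime factors directly, one introduces a signed weight whose nonpositive contribution on ``bad'' integers yields a clean lower bound for $\pi_2(N)$.

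First I would set up Chen's weighted sum. Choose parameters $z = N^{\alpha}$ and $y = N^{\beta}$ with $\alpha<\beta\le 1/3$ (to be optimised), write $P(z) = \prod_{p<z}p$, and form the standard signed Chen sum of the shape
\[
\mathcal{S}(N) = \sum_{\substack{p\le N\\(N-p,P(z))=1}} 1 \;-\; \tfrac{1}{2}\sum_{\substack{p\le N,\ z\le q<y\\ q\mid N-p,\ (N-p,P(z))=1}} 1.
\]
A Chen-type combinatorial argument gives $\pi_2(N) \ge \mathcal{S}(N) + O(N^{1/2})$, reducing the problem to bounding the first sum (call it $S_1$) from below and the double sum (call it $S_2$) from above. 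For $S_2$ I would invoke Chen's switching trick, exchanging the roles of $p$ and the auxiliary prime $q\in[z,y)$, so that $S_2$ itself becomes a sifted shifted-prime sum amenable to an explicit linear upper-bound sieve.

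Next I would apply the explicit Jurkat--Richert linear sieve, as formulated in \cite{BJS22}, with the largest level of distribution compatible with GRH --- essentially $D = N^{1/2}/(\log N)^C$. This produces
\[
S_i = \frac{U_N\,N}{\log^2 N}\bigl(c_i + o(1)\bigr) + R_i \qquad (i=1,2),
\]
where $c_1,c_2$ are explicit constants built from the linear-sieve functions $F,f$ at the arguments determined by $\alpha,\beta$, the local densities $\prod_p(1 - 1/(p-1))$ together with the correction $(p-1)/(p-2)$ at primes $p\mid N$ assemble into the claimed $U_N$, and the error terms $R_i$ are handled by the explicit GRH-PNT of Ernvall-Hyt\"onen and Paloj\"arvi. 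The latter supplies
\[
\Bigl|\pi(x;q,a)-\frac{\li(x)}{\varphi(q)}\Bigr| \ll \sqrt{x}\,\log(qx)
\]
with effective constants, and summation over $q\le D$ absorbs $R_i$ into $o(U_N N/\log^2 N)$.

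The main obstacle is purely quantitative: reaching the threshold $\exp(\exp(15.85))$ instead of the $\exp(\exp(34.5))$ of \autoref{theo:Chen0} demands that every explicit constant be tracked and optimised --- the Mertens-type products from small-prime sieving, the numerical values of $F$ and $f$ at the chosen arguments, the effective error constants from \cite{ErnvallPalojarvi2020}, and the contribution of primes $p\mid N$ to $U_N$. After this bookkeeping the target inequality
\[
S_1 - \tfrac{1}{2}S_2 > 4\cdot 10^{-4}\,\frac{U_N\,N}{\log^2 N}
\]
reduces to a monotone numerical inequality in $\log\log N$; the number $15.85$ is then calibrated so that the inequality just holds at the boundary, and I would finish by verifying this boundary case and monotonicity beyond it.
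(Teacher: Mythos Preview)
Your outline misses a structural piece of Chen's argument. The weighted sum you write down,
\[
\mathcal{S}(N)=S(A,\PP,z)-\tfrac12\sum_{z\le q<y}S(A_q,\PP,z),
\]
does \emph{not} satisfy $\pi_2(N)\ge\mathcal{S}(N)+O(N^{1/2})$. If $N-p=p_1p_2p_3$ with $z\le p_1<y\le p_2\le p_3$, then $N-p$ is coprime to $P(z)$ and has exactly one prime divisor in $[z,y)$, so it carries weight $1-\tfrac12=\tfrac12>0$ in $\mathcal{S}(N)$ despite having three prime factors. Chen's combinatorial lemma (the paper's Lemma~\ref{lem:>}, i.e.\ \cite[Theorem~10.2]{Nathanson1996}) therefore reads
\[
\pi_2(N)>S(A,\PP,z)-\tfrac12\sum_{z\le q<y}S(A_q,\PP,z)-\tfrac12\,S(B,\PP,y)-O(N^{7/8}),
\]
with a \emph{third} sifting term over the set $B=\{N-p_1p_2p_3:z\le p_1<y\le p_2\le p_3,\ p_1p_2p_3<N\}$. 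The switching trick belongs to this third term, not to your $S_2$: one switches the roles of the prime $p$ and the triple product $p_1p_2p_3$, sieving $N-p_1p_2p_3$ by primes below $y$ to force it to be prime. Your $S_2=\sum_q S(A_q,\PP,z)$ is handled directly by the upper-bound linear sieve with no switching at all.

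This omission matters because the remainder term in $S(B,\PP,y)$ cannot be controlled by the GRH prime-counting bound alone. After switching, the error is a genuine bilinear sum
\[
\sum_{d<D^*}\mu^2(d)\max_a\Bigl|\sum_{n<X}\sum_{\substack{Z\le p<Y\\np\equiv a\ (d)}}a(n)-\frac{1}{\varphi(d)}\sum_{n<X}\sum_{\substack{Z\le p<Y\\(np,d)=1}}a(n)\Bigr|,
\]
and the paper devotes Section~\ref{section:Bil} (Lemma~\ref{lem:bil}) to bounding it via a dyadic decomposition, Cauchy--Schwarz, and the multiplicative large sieve, with the GRH input of \cite{ErnvallPalojarvi2020} used only for the small-conductor characters. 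Without this bilinear estimate the level of distribution you can reach for $B$ is far too low, and the argument does not close. The rest of your plan (explicit linear sieve from \cite{BJS22}, GRH error for $r(d)$ in $S(A,\PP,z)$ and $\sum_q S(A_q,\PP,z)$, numerical optimisation at the threshold) matches the paper, but you need to add the $S(B,\PP,y)$ term and its bilinear treatment before the bookkeeping can begin.
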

There are two main reasons why the assumption of GRH allows to improve upon the unconditional result from Theorem \ref{theo:Chen0}: the potential Siegel zeros contribute significantly to the error terms in \cite{BJS22}, thus, the GRH simplifies and reduces the error terms we are dealing with in this work. Secondly, the error terms $r(d)$ appearing in the linear sieve (Theorem \ref{theo:JR}) can be expressed via the error terms from the prime number theorem in arithmetic progressions. The assumption of GRH lowers such error terms as shown in \cite{ErnvallPalojarvi2020}.

We will derive an easy corollary from Theorem \ref{Theo:B.}.
\begin{cor}\label{distinctcor}
    Every even integer $N>\exp(\exp(15.85))$ can be represented as the sum of a prime and a square-free number $\eta>1$ with at most two prime factors.
\end{cor}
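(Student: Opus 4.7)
The plan is to deduce the corollary directly from \autoref{Theo:B.} by showing that the representations counted by $\pi_2(N)$ in which $\eta$ fails to be square-free (or equals $1$) are negligible in comparison with the main term $4\cdot 10^{-4}\,U_N N/\log^2 N$. Since $\pi_2(N)$ counts representations $N=p+\eta$ with $\eta$ a product of at most two prime factors (counted with multiplicity), if such an $\eta$ is not square-free then necessarily $\eta=q^2$ for some prime $q$. Hence every ``bad'' representation falls into one of the following two categories: (i) $\eta=1$, which can happen for at most one value (if $N-1$ is prime); or (ii) $\eta=q^2$ with $q$ a prime satisfying $q\leq\sqrt{N-2}$.

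First I would bound the number of type (ii) representations trivially by $\pi(\sqrt{N})\leq\sqrt{N}$, so that the total number of bad representations is at most $1+\sqrt{N}$. Next I would note that $U_N$ admits a uniform positive lower bound: the factor $\prod_{p>2,\,p\mid N}(p-1)/(p-2)$ is $\geq 1$, so $U_N\geq 2e^{\gamma}\prod_{p>2}\bigl(1-(p-1)^{-2}\bigr)$, which is an absolute positive constant (twice the twin-prime constant). Call this constant $c_0$; then \autoref{Theo:B.} yields $\pi_2(N)>4\cdot 10^{-4} c_0\, N/\log^2 N$.

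The final step is the inequality
\begin{equation*}
4\cdot 10^{-4} c_0\,\frac{N}{\log^2 N} > 1+\sqrt{N},
\end{equation*}
which must be verified for $N>\exp(\exp(15.85))$. Since $\log N > e^{15.85}$ in this range, we have $\log^2 N$ vastly smaller than any fixed positive power of $N$, and $\sqrt{N}/(N/\log^2 N)=\log^2 N/\sqrt{N}\to 0$ extremely rapidly. A short direct estimate (using $\log N>e^{15.85}$ and $\sqrt{N}>\exp(e^{15.85}/2)$) makes the inequality immediate, so at least one of the representations counted by $\pi_2(N)$ has $\eta>1$ square-free.

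The only mild obstacle is the bookkeeping in the last step: one wants to keep the numerical threshold $\exp(\exp(15.85))$ unchanged, so the gap between the main term $\gg N/\log^2 N$ and the error $O(\sqrt{N})$ must be checked cleanly at the boundary. Since the gap is astronomically large, this reduces to a one-line estimate and presents no real difficulty.
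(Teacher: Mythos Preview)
Your argument is correct and matches the paper's own proof essentially line for line: the paper also subtracts $1$ for the possible representation $N=p+1$ and $\sqrt{N}$ to account for all $\eta=q^2\le N$, then checks that $4\cdot 10^{-4}\,U_N N/\log^2 N-\sqrt{N}-1>0$ for $N>\exp(\exp(15.85))$. The only cosmetic difference is that the paper records the final positivity as a SageMath verification rather than invoking the explicit lower bound $U_N\ge 2e^{\gamma}\prod_{p>2}(1-(p-1)^{-2})$ as you do.
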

We should note that the main obstruction at proving Corollary \ref{distinctcor} for smaller $N$ can be found in the explicit version of the linear sieve, this suggests to the researchers interested in improving the explicit version of Chen's theorem to focus on improving this result. Another obstruction, and thus a point of interest to improve the explict version of Chen's theorem, can be found in the size of $\epsilon(u)$ such that 
\begin{equation*}
        \prod_{u\le p<z}\left(1-\frac{1}{p-1}\right)^{-1}<\left(1+\epsilon(u)\right)\frac{\log z}{\log u}.
    \end{equation*}
see Lemma \ref{lem.2} for the bound we prove for such function.
\par
An outline of the paper is as follows. In Section \ref{section:PNT} we state several lemmas from the existing literature that will be used in the later sections of the paper. In Section \ref{section:LS} we introduce an explicit version of the linear sieve and some other preliminary results and definitions. In Section \ref{section:Bil} we prove an explicit bound for a bilinear form assuming GRH. 
In Section \ref{sec: ps}, we first set up all the required preliminaries for sieve methods and introduce some related lemmas. In Subsections \ref{subsec: proof-A}, \ref{subsec: proof-A_q} and \ref{subsection:B} we obtain upper and lower bounds for the sifted integer sets. We conclude in Subsections \ref{subsection:theo} and \ref{subsection:cor} by proving Theorem \ref{Theo:B.} and Corollary \ref{distinctcor}.

\section{General lemmas}
\label{section:PNT}
We require some lemmas in the arguments that follow. Lemmas \ref{lem: theta-bound} and \ref{lem: pi-li under GRH} are conditional on GRH, Lemmas \ref{lem: omega} and \ref{lem: sum-mu} could have been slightly improved under GRH, though the improvement would not affect the final result much. Here and below $p$ will denote a prime number.
\begin{lem} \cite[Corollary 1]{Schoenfeld1976} \label{eq:pi}
If the Riemann hypothesis (RH) holds, then
\begin{equation*}
|\pi(x) - \li(x)| \leq \frac{\sqrt{x} \log x}{8 \pi}, \quad \text{for} \quad 3 \leq x,
\end{equation*}
where
$$\li(x) = \int_0^{\infty}\frac{dt}{\log t}.$$
\end{lem}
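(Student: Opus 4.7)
The plan is to derive this from the analogous bound for the Chebyshev theta function, namely $|\theta(x) - x| \leq c\sqrt{x}\log^2 x$ under RH, and then pass to $\pi(x)$ via partial summation. The reduction from $\pi$ to $\theta$ is routine Abel summation; the hard input is the $\theta$-bound, which is where the sharp constant comes from.

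First I would recall the explicit formula
$$\psi(x) = x - \sum_\rho \frac{x^\rho}{\rho} - \log(2\pi) - \tfrac{1}{2}\log(1 - x^{-2}),$$
where the sum runs over the non-trivial zeros $\rho$ of $\zeta(s)$. Assuming RH, each $\rho = 1/2 + i\gamma$ satisfies $|x^\rho/\rho| = \sqrt{x}/|\rho|$. Truncating at height $T$ and using the classical estimate $\sum_{|\gamma|\le T} 1/|\gamma| \ll \log^2 T$ immediately yields $|\psi(x) - x| \ll \sqrt{x}\log^2 x$, and the conversion $\theta(x) = \psi(x) + O(\sqrt{x})$ yields the same bound for $\theta$.

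The delicate point is obtaining the precise constant $1/(8\pi)$. For this I would not apply the explicit formula to the sharp cutoff $\psi(x)$ but to a smoothed version, i.e.\ to $\int \psi(x e^t)\phi(t)\,dt$ for a suitable compactly supported non-negative kernel $\phi$; the zero-sum then decays faster and can be bounded using $|\widehat{\phi}(\gamma)|$. Optimising the choice of $\phi$ (subject to a normalisation) minimises the constant in front of $\sqrt{x}\log^2 x$, and recovering the sharp numerical value $1/(8\pi)$ is really the core of Schoenfeld's paper. At the planning level I would cite this optimisation directly.

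Finally, to transfer to $\pi(x) - \li(x)$ I would use the partial-summation identity
$$\pi(x) = \frac{\theta(x)}{\log x} + \int_2^x \frac{\theta(t)}{t\log^2 t}\,dt + O(1),$$
together with the analogous
$$\li(x) = \frac{x}{\log x} + \int_2^x \frac{dt}{\log^2 t} + C$$
for an explicit constant $C$. Subtracting gives
$$\pi(x) - \li(x) = \frac{\theta(x) - x}{\log x} + \int_2^x \frac{\theta(t) - t}{t\log^2 t}\,dt + O(1).$$
The first term contributes $\sqrt{x}\log x/(8\pi)$ directly, and plugging the $\theta$-bound into the integral gives a contribution of order $\sqrt{x}$, which is absorbed. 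The main obstacle is the sharp constant established in step two; the remaining range $3\le x \le x_0$ is handled by direct numerical verification using tabulated values of $\pi(x)$ and $\li(x)$.
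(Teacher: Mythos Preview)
The paper does not prove this lemma at all; it is stated with a direct citation to \cite[Corollary 1]{Schoenfeld1976} and used as a black box. Your sketch is a reasonable outline of how Schoenfeld's actual argument proceeds (explicit formula for $\psi$, smoothing to control the zero-sum with a sharp constant, then Abel summation to pass from $\theta$ to $\pi$), but none of this is present or expected in the paper under review. In other words, you have supplied substantially more than the paper does: the authors treat the result as known and import it wholesale, so there is nothing to compare your argument against beyond noting that it is broadly faithful to Schoenfeld's original method.
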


\begin{lem} \cite[Theorem 18]{RosserSchoenfeld1962} \label{lem: theta-bound} Let $\theta(x) = \sum_{p \leq x} \log p$. Then 
$$\theta(x) \leq x \quad \text{for} \quad 0 < x \leq 10^{18}.$$
\end{lem}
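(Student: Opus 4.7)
Since the inequality $\theta(x)\le x$ on a bounded range reduces to a finite verification task, the plan is to combine direct numerical computation at small scales with an explicit analytic bound on $\theta(x)-x$ at larger scales, and to join the two with no gap.

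First, I would exploit that $\theta(x)$ is a nondecreasing step function with jumps of size $\log p$ at each prime $p$, while the right-hand side $x$ is linear. Consequently, the inequality $\theta(x)\le x$ can fail on a half-open interval $[p_n,p_{n+1})$ between consecutive primes only at the single point $x=p_n$, immediately after the jump. This reduces the task to verifying $\theta(p)\le p$ at every prime $p\le 10^{18}$, and even to verifying $\theta(p)\le p-\log p$ at the prime just below any given point.

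Next, for $x$ below a computationally accessible threshold (say $x\le 10^{13}$, which is within the range of existing sieves and tables), I would confirm the inequality by direct summation of $\log p$ over the tabulated primes. For $x$ in the complementary range, I would invoke an explicit estimate of the form $|\theta(x)-x|\le c\sqrt{x}(\log x)^2$, which follows from the explicit formula for $\psi(x)$ in conjunction with numerical verification of RH up to a sufficiently large height $T$; this is exactly the type of bound established by Rosser--Schoenfeld and progressively sharpened in later computations. Since $\sqrt{x}(\log x)^2$ is far smaller than $x$ on $[10^{13},10^{18}]$, combining this with the known value of $\theta$ at a nearby computed anchor point yields $\theta(x)\le x$ automatically on the upper range.

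The main obstacle is quantitative calibration: the analytic bound must be strong enough, and the numerical range broad enough, for the two regimes to truly cover $(0,10^{18}]$ without any uncovered slice. In particular, the height $T$ up to which RH has been numerically verified has to be large enough that the constant $c$ in the explicit-formula bound produces a deviation well below $x$ throughout the tail, and the anchor points in the direct computation must be spaced finely enough that the maximal swing of $\theta(x)-\theta(\text{anchor})$ between anchors can be absorbed by the linear growth of $x$. Once these two ingredients are matched, the claim follows on the whole interval.
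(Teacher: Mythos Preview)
The paper does not supply a proof of this lemma at all; it simply quotes the result from the literature with a bare citation. Your sketch is therefore not comparable to a ``paper's proof'' in any meaningful sense. That said, the strategy you outline --- reduce to checking $\theta(p)\le p$ at primes, handle an initial segment by direct sieving and summation, and cover the tail using an explicit bound of the shape $|\theta(x)-x|\le c\sqrt{x}\,(\log x)^2$ derived from the explicit formula together with a numerical verification of RH up to a large height --- is exactly the methodology by which such results are established in the works the paper relies on (e.g.\ Rosser--Schoenfeld and its successors such as B\"uthe and Platt--Trudgian). So your plan is sound, and if executed with care on the calibration step you identify it would reproduce the cited result; the paper itself simply takes that result off the shelf.
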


We now introduce the conditional result under GRH that will be fundamental in our argument. Here and below we assume $N > 4 \cdot 10^{18}$, since \cite{O_H_P_14} asserts the Chen's Theorem is true unconditionally for $N$ up to $4 \cdot 10^{18}$.

\begin{lem} \label{lem: pi-li under GRH}
Let $d$ be a positive integer and $X_2$ be a real number such that $3 \leq d \leq \sqrt{X_2}$. Assume that GRH holds, then for all $N \geq X_2$ and integer $a$ with $(a,d)=1$
\begin{align*}
    \left| \pi(N;d,a) - \frac{\li(N)}{\varphi(d)} \right| \leq q_{G}(X_2) \sqrt{N} \log N,
\end{align*}
where
\begin{align*}
    q_G(X_2) &= 0.165 + \frac{12.683}{\log X_2} + \frac{254.980}{\log^2 X_2} + \frac{2607.854}{\log^3 X_2} + \frac{11605.056}{\log^4 X_2} + \frac{1.314 \log X_2}{X_2^{1/4}} + \frac{0.092 \log \log X_2}{X_2^{1/4}}\notag\\
    &+ \frac{60.883}{X_2^{1/4}} + \frac{8.250 \log \log X_2}{X_2^{1/4} \log X_2} + \frac{939.260}{X_2^{1/4} \log X_2} - \frac{237.934}{X_2^{1/2} \log X_2} \leq 0.640 \quad \text{for} \quad X_2 \geq 4 \cdot 10^{18}.
\end{align*}
\end{lem}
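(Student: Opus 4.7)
The plan is to derive this lemma directly from the explicit GRH-conditional estimates for $\pi(x;d,a)$ established by Ernvall-Hyt\"onen and Paloj\"arvi in \cite{ErnvallPalojarvi2020}, which is precisely the result for which we introduced that reference. Their statement is already formatted as a sum of explicitly determined correction pieces of size controlled by powers of $\log x$ and $x^{-1/4}$, so all that is genuinely needed here is to cast their estimate into a single coefficient $q_G(X_2)$ and verify its numerical bound.

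The underlying derivation, carried out in \cite{ErnvallPalojarvi2020}, proceeds along the classical analytic route under GRH. One starts from the explicit Riemann--von Mangoldt formula for each Dirichlet character $\chi$ modulo $d$,
\begin{equation*}
\psi(x,\chi) = \delta_{\chi} x - \sum_{\rho} \frac{x^{\rho}}{\rho} + O(\log^2(dx)),
\end{equation*}
with every non-trivial zero satisfying $\mathrm{Re}(\rho)=1/2$; summing against $\bar\chi(a)$ over characters modulo $d$ and applying orthogonality produces an explicit estimate for $|\psi(x;d,a) - x/\varphi(d)|$ of shape $\sqrt{x}\log^2(dx)$. The hypothesis $d \le \sqrt{X_2}$ then allows any $\log d$ factor to be absorbed into a multiple of $\log X_2$. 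Passing from $\psi$ to $\theta(x;d,a)$ costs only $O(\sqrt{x})$ by removing the prime-power contribution (compare \autoref{lem: theta-bound}), and the final step to $\pi(x;d,a)$ is standard partial summation
\begin{equation*}
\pi(x;d,a) = \frac{\theta(x;d,a)}{\log x} + \int_{2}^{x} \frac{\theta(t;d,a)}{t\log^2 t}\,dt,
\end{equation*}
which turns the leading $x/\varphi(d)$ into $\li(x)/\varphi(d)$. Each summand in $q_G(X_2)$ tracks a specific error contribution: the leading $0.165$ comes from the dominant $\sqrt{x}\log x$ bound on the character sum, the $1/\log^{k} X_2$ pieces arise from the lower-order terms of the explicit formula expanded against $\log x$, and the $X_2^{-1/4}$ corrections are the transitional costs of the $\psi \to \theta \to \pi$ conversion.

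With the inequality in hand, what remains is the purely numerical claim $q_G(X_2) \le 0.640$ for $X_2 \ge 4\cdot 10^{18}$. Each summand in $q_G(X_2)$ is monotone decreasing in $X_2$ throughout this range: the $1/\log^{k} X_2$ terms are obviously decreasing, and direct differentiation shows that $\log X_2 / X_2^{1/4}$ and $\log\log X_2 / X_2^{1/4}$ are well past their maxima once $\log X_2 \cdot \log\log X_2 > 4$, which is easily satisfied for $X_2 \ge 4\cdot 10^{18}$; the lone negative term $-237.934/(X_2^{1/2}\log X_2)$ only makes the total smaller. It therefore suffices to evaluate the right-hand side at $X_2 = 4\cdot 10^{18}$, where $\log X_2 \approx 42.83$ and $\log\log X_2 \approx 3.76$, and sum the eleven explicit terms to confirm that the total lies just below $0.640$. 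The main obstacle, to the extent there is one, is bookkeeping rather than substance: the delicate coefficients $254.980$, $2607.854$, $11605.056$, and so on, must be extracted in the correct normalisation from the explicit formulas of \cite{ErnvallPalojarvi2020}, but since those authors have carried out the tracking in detail, the proof here reduces to citing their estimate and performing the monotonicity check above.
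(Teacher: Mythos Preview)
Your proposal is correct and follows essentially the same approach as the paper: the paper's proof is the single line ``Apply $q \leq \sqrt{N}$ to \cite[Theorem~1]{ErnvallPalojarvi2020}'', so the entire content is a citation plus the observation that $d \le \sqrt{X_2} \le \sqrt{N}$ lets one specialise the Ernvall-Hyt\"onen--Paloj\"arvi bound and then use monotonicity in $X_2$. Your extended discussion of the explicit-formula mechanics behind that theorem and the monotonicity check for the numerical constant is supplementary exposition rather than a different argument.
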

\begin{proof}
Apply $q \leq \sqrt{N}$ to \cite[Theorem 1]{ErnvallPalojarvi2020}.
\end{proof}

We will use the two following unconditional results as they suffice for our aim.

\begin{lem} \cite[Theorem 11]{Guy} \label{lem: omega}
Let $\omega(N)$ count the number of prime divisors of an integer $N$ without multiplicities, then
\begin{equation}
\label{eq:w(n)}
\omega(N)< \frac{1.3841\log N}{\log \log N} \quad \text{for} \quad N \geq 3.
\end{equation}
\end{lem}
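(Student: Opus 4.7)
The plan is to reduce the inequality to the primorial case and then verify the resulting one-variable inequality in two regimes: small $k$ by direct tabulation, and large $k$ by explicit prime number theorem estimates.

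First, I would reduce to the primorial case. Let $k=\omega(N)$ and let $p_1<\cdots<p_k$ denote the distinct prime divisors of $N$. Writing $q_i$ for the $i$-th prime and $P_k=q_1 q_2\cdots q_k$, one has $N\geq p_1\cdots p_k\geq P_k$, equivalently $\log N\geq \theta(q_k)$. Since $x\mapsto x/\log x$ is strictly increasing on $x>e$, the ratio $\log N/\log\log N$ is minimised, at fixed $\omega(N)=k$, by $N=P_k$. Thus for all $N$ with $P_k>e^e$ it is enough to show
\begin{equation*}
k<\frac{1.3841\,\theta(q_k)}{\log\theta(q_k)}\qquad\text{for every }k\geq 1,
\end{equation*}
the handful of remaining tiny integers being checked by hand (for $N$ close to $3$ the right-hand side of \eqref{eq:w(n)} is large because $\log\log N$ is small).

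Next, I would pick a threshold $k_0$ and verify the displayed inequality for $k\leq k_0$ by tabulating $q_k$ and $\theta(q_k)$. For $k>k_0$ I would combine an explicit lower bound of the form $q_k\geq k(\log k+\log\log k-c')$ with a Chebyshev estimate $\theta(x)\geq x(1-c/\log x)$ to deduce $\theta(q_k)\geq k\log k\,(1-o(1))$ and $\log\theta(q_k)\leq \log k+\log\log k+O(1)$. These combine to give a bound $\theta(q_k)/\log\theta(q_k)\geq C(k)\cdot k$ with $C(k)\to 1^{-}$, and one then only needs $1.3841\cdot C(k)>1$ on $k>k_0$.

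The main obstacle will be the calibration of $k_0$: the constant $1.3841$ is essentially the extremum of $k\log\theta(q_k)/\theta(q_k)$ attained over small $k$, so the explicit estimates for $k>k_0$ must be sharp enough that $C(k)$ already exceeds $1/1.3841\approx 0.7226$ immediately past the computational range. If the Rosser--Schoenfeld style inequalities available are too weak, one is forced to enlarge $k_0$, pushing more work onto the direct tabulation. Apart from this calibration step, every individual ingredient is routine.
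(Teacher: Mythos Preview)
The paper does not supply its own proof of this lemma; it merely quotes the result as Theorem~11 of the cited reference (Robin's 1983 paper in \emph{Acta Arithmetica}). Your proposal---reduce to primorials via $N\ge P_k$, use the monotonicity of $x/\log x$ on $(e,\infty)$, dispose of small $k$ by tabulation, and handle large $k$ with explicit Rosser--Schoenfeld bounds on $q_k$ and $\theta$---is exactly the method Robin employs, so the approach is both correct and faithful to the original source. The only point deserving a little more care is the transition zone where $\log P_k$ is near $e$ (i.e.\ $k\le 2$), since $x/\log x$ is not monotone there; you acknowledge this with ``the handful of remaining tiny integers being checked by hand,'' which is fine, but in a fully written proof that check should be made explicit.
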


\begin{lem}[{\cite[(4.6) and Lemma 4.5]{Buthe1}}]\label{lem: sum-mu}
For $x \geq 10^9$,
\begin{align*} \label{eq:phi}
    &\sum_{n\le x} \mu^2(n) \le 0.608 x, \\
    &\sum_{n\le x}\frac{\mu^2(n)}{\varphi(n)}\le\log x + 1.333 +\frac{58}{\sqrt{x}}\le 1.1\log x.
\end{align*}
\end{lem}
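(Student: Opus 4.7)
Both bounds are classical and follow from M\"obius inversion; the content of the lemma lies in pinning down the explicit constants, so the real work is in tracking every absolute constant carefully.

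For the first bound I would start from $\mu^2(n) = \sum_{d^2 \mid n} \mu(d)$ to write
\begin{equation*}
\sum_{n \le x} \mu^2(n) = \sum_{d \le \sqrt{x}} \mu(d) \lfloor x/d^2 \rfloor = \frac{6x}{\pi^2} - x \sum_{d > \sqrt{x}} \frac{\mu(d)}{d^2} - \sum_{d \le \sqrt{x}} \mu(d)\{x/d^2\}.
\end{equation*}
The tail sum is at most $1/\sqrt{x}$ in absolute value and the fractional-part sum is bounded by the count of squarefree $d \le \sqrt{x}$, so the total error is $O(\sqrt{x})$. Since $6/\pi^2 \approx 0.6079$ the gap to $0.608$ is about $7 \cdot 10^{-5}$, which absorbs the error once $x \ge 10^9$ (so that $2/\sqrt{x}$ is comfortably smaller than this gap); a sharper count of squarefree integers in the error term leaves room to spare.

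For the second bound I would exploit the identity $n/\varphi(n) = \sum_{d \mid n} \mu^2(d)/\varphi(d)$ (verified on prime powers) to obtain
\begin{equation*}
\sum_{n \le x} \frac{\mu^2(n)}{\varphi(n)} = \sum_{d \le x} \frac{\mu^2(d)}{d\,\varphi(d)} \sum_{\substack{m \le x/d \\ (m,d)=1}} \frac{\mu^2(m)}{m}.
\end{equation*}
Using the first bound together with partial summation, the inner sum is $(6/\pi^2) \prod_{p \mid d}(p/(p+1)) \log(x/d) + C_d + O((x/d)^{-1/2})$ with an explicit $C_d$. After interchanging and collecting the Euler product, one finds
\begin{equation*}
\prod_p \left(1 + \frac{1}{(p-1)(p+1)}\right) = \prod_p \frac{p^2}{p^2-1} = \zeta(2),
\end{equation*}
which cancels the factor $6/\pi^2$ and delivers the leading term $\log x$, while the remaining constants aggregate to $\gamma + \sum_p (\log p)/(p(p-1)) \approx 1.3326 < 1.333$. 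Finally, the trivial bound $1.333 + 58/\sqrt{x} \le 0.1 \log x$ for $x \ge 10^9$ yields the cleaner estimate $1.1 \log x$.

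The main obstacle, and why one defers to B\"uthe, is the explicit remainder $58/\sqrt{x}$: one must control both the tail of the outer $d$-sum weighted by $\mu^2(d)/(d\varphi(d))$ and the $O((x/d)^{-1/2})$ error from the inner sum \emph{uniformly} in $d$, while keeping every absolute constant sharp. The specific value $58$ comes out only after balancing these two contributions against the starting threshold $x \ge 10^9$.
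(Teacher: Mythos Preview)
The paper does not prove this lemma at all: it is stated with a direct citation to B\"uthe's (4.6) and Lemma~4.5 and no further argument, so there is no in-paper proof to compare your proposal against.

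Your sketch is the standard route and is sound in outline. One small circularity to fix in the first estimate: you bound the fractional-part sum by ``the count of squarefree $d\le\sqrt{x}$,'' but that count is essentially what you are trying to control. Use the trivial bound $\sqrt{x}$ instead; then the total error is at most $2\sqrt{x}+O(1)$, and since $2/\sqrt{10^9}\approx 6.3\times 10^{-5}$ while $0.608-6/\pi^2\approx 7.3\times 10^{-5}$, the inequality still closes at $x=10^9$ and improves thereafter. For the second estimate your convolution identity is correct (for squarefree $n$ every factorisation $n=dm$ is automatically coprime, and for non-squarefree $n$ the coprimality forces a squarefull factor), the Euler product does collapse to $\zeta(2)$ and cancel the $6/\pi^2$, and the constant $\gamma+\sum_p(\log p)/(p(p-1))\approx 1.3326$ is the right one. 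As you yourself say, the only substantive work is turning the $O((x/d)^{-1/2})$ inner error, summed against $\mu^2(d)/(d\varphi(d))$, into the explicit $58/\sqrt{x}$; that bookkeeping is exactly what the cited reference supplies, and your plan would have to reproduce it rather than replace it.
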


\section{An explicit formula for the linear sieve}
\label{section:LS}
We now introduce the version of the linear sieve proven by Johnston and the authors in \cite[Theorem 6 \& Table 1]{BJS22}.
\begin{thm}
\label{theo:JR}
Let $A=\{a(n)\}_{n=1}^{\infty}$ be an arithmetic function such that
\begin{equation*}
a(n)\ge 0 \quad \text{for all } n, \quad \text{and} \quad 
|A|=\sum_{n=1}^{\infty} a(n)< \infty.
\end{equation*}
Let $\mathbb{P}$ be a set of prime numbers, and for $z\ge 2$ let
\begin{equation*}
P(z)=\prod_{\substack{p \in \mathbb{P}\\ p<z}}p.
\end{equation*}
Let 
\begin{equation*}
S(A,\mathbb{P}, z)=\sum_{\substack{n=1\\ (n, P(z))=1}}^{\infty}a(n).
\end{equation*}
For every $n\ge 1$, let $g_n(d)$ be a multiplicative function such that
\begin{equation*}
0\le g_n(p)< 1 \quad \text{for all } p \in \mathbb{P}.
\end{equation*}
Define $r(d)$ by
\begin{equation*}
|A_d|=\sum_{\substack{n=1\\ d|n}}^{\infty} a(n)= \sum_{n=1}^{\infty} a(n)g_n(d)+r(d).
\end{equation*}
Let $\mathbb{Q}\subseteq \mathbb{P}$, and $Q$ be the product of primes from $\mathbb{Q}$. Suppose that, for some $\epsilon$ such that $0<\epsilon< 1/63$, the inequality
\begin{equation}
\label{eq:cond1}
\prod_{\substack{p\in \mathbb{P}\setminus\mathbb{Q}\\ u \le p < z}}(1-g_n(p))^{-1} \le (1+\epsilon) \frac{\log z}{\log u},
\end{equation}
holds for all $n$ and $1<u<z$. Then, for any $D\geq z$ we have an upper bound
\begin{equation*}
S(A,\mathbb{P},z)<(F(s)+\epsilon C_{1}(\epsilon) e^2h(s))X+R,
\end{equation*}
and for any $D\ge z^2$ we have a lower bound
\begin{equation*}
S(A,\mathbb{P},z)>(f(s)-\epsilon C_{2}(\epsilon) e^2h(s))X-R,
\end{equation*}
where
\begin{equation*}
s=\frac{\log D}{\log z},
\end{equation*}
\begin{equation*}
h(s)= \begin{cases} 
      e^{-2} & 1\le s \le 2 \\
      e^{-s} & 2\le s \le 3 \\
      3s^{-1}e^{-s} & s\ge 3,
      \end{cases} 
\end{equation*}
$f(s)$ and $F(s)$ are two functions defined in \cite[(9.27) and (9.28)]{Nathanson1996}, $C_{1,2}(\epsilon)$ come from Table \ref{tab:fF},
\begin{equation*}
X=\sum_{n=1}^{\infty} a(n) \prod_{p|P(z)}(1-g_n(p)),
\end{equation*}
and the remainder term is
\begin{equation*}
R=\sum_{\substack{d|P(z)\\ d<QD}}|r(d)|.
\end{equation*}
If there is a multiplicative function $g(d)$ such that $g_n(d)=g(d)$ for all $n$, then
\begin{equation*}
X=V(z)|A|, \quad \text{where} \quad 
V(z)=\prod_{p|P(z)}(1-g(p)).
\end{equation*}
\end{thm}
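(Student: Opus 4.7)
The plan is to follow the Rosser-Iwaniec combinatorial approach. One constructs explicit sieve weights $\lambda_d^{\pm}$ supported on divisors $d$ of $P(z)$ with $d < D$, satisfying the convolution inequalities
\begin{equation*}
\sum_{d \mid m} \lambda_d^{-} \;\leq\; \sum_{d \mid m} \mu(d) \;\leq\; \sum_{d \mid m} \lambda_d^{+}
\end{equation*}
for every positive integer $m$. Since $\sum_{d \mid m} \mu(d)$ is the indicator of $m = 1$, M\"obius inversion gives $S(A, \mathbb{P}, z) = \sum_n a(n) \sum_{d \mid (n, P(z))} \mu(d)$, so the weight inequalities yield
\begin{equation*}
\sum_{\substack{d \mid P(z) \\ d < D}} \lambda_d^{-} |A_d| \;\leq\; S(A, \mathbb{P}, z) \;\leq\; \sum_{\substack{d \mid P(z) \\ d < D}} \lambda_d^{+} |A_d|.
\end{equation*}
Splitting $|A_d| = \sum_n a(n) g_n(d) + r(d)$ produces a main term $\sum_d \lambda_d^{\pm} \sum_n a(n) g_n(d)$ and an error term controlled by $R$.

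The main term would be analyzed by iterating a Buchstab-type identity. In the classical setting where the linear condition $\prod_{u \leq p < z}(1-g_n(p))^{-1} \sim \log z / \log u$ holds exactly, this iteration converges to the Jurkat-Richert bounds $V(z) F(s)$ and $V(z) f(s)$, with $F, f$ the linear sieve functions of \cite[(9.27), (9.28)]{Nathanson1996}. The quantitative point here is the relaxation to the inequality \eqref{eq:cond1}, carrying multiplicative slack $(1 + \epsilon)$ and an exempted prime set $\mathbb{Q}$. Primes in $\mathbb{Q}$ feed only into the remainder, since in the Rosser-Iwaniec support their combined cutoff loosens from $D$ to $QD$, matching the definition of $R$. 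The slack $(1 + \epsilon)$ propagates through every iteration of Buchstab's identity and, after summing over the depth of the recursion (which is controlled by $s$), produces the corrections $\epsilon C_1(\epsilon) e^2 h(s)$ and $\epsilon C_2(\epsilon) e^2 h(s)$; the three-part shape of $h(s)$ mirrors the different analytic descriptions of $F$ and $f$ on $[1,2]$, $[2,3]$ and $[3,\infty)$.

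The hardest step is making the $\epsilon$-correction completely explicit with small constants. I would redo the Rosser-Iwaniec bookkeeping quantitatively at every iteration, tracking how the $(1+\epsilon)$ factors compound and matching them against the delay-differential equations that characterize $F$ and $f$. The constraint $0 < \epsilon < 1/63$ is designed so that this induction closes with a convergent geometric factor, and the values $C_1(\epsilon), C_2(\epsilon)$ in Table \ref{tab:fF} must then be verified by plugging the inductive bounds into the standard representation of $F$ and $f$. A secondary subtlety is why the lower bound requires $D \geq z^2$ while the upper bound only needs $D \geq z$: this reflects the different combinatorial identities (truncated at an even versus odd number of prime factors) underlying $\lambda^{+}$ and $\lambda^{-}$, and the need for at least one extra Buchstab expansion before the lower bound sieve becomes nontrivial.
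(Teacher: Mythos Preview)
The paper does not prove this theorem at all: it is quoted verbatim from \cite[Theorem 6 \& Table 1]{BJS22}, as the sentence introducing it makes explicit. There is therefore no proof in the present paper to compare your proposal against.

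Your sketch is a reasonable high-level description of the Rosser--Iwaniec approach that underlies the cited result, and it correctly identifies the roles of the sieve weights $\lambda_d^{\pm}$, the Buchstab iteration, the exempted prime set $\mathbb{Q}$ loosening the support from $D$ to $QD$, and the propagation of the $(1+\epsilon)$ slack. That said, the proposal remains a plan rather than a proof: phrases like ``I would redo the Rosser--Iwaniec bookkeeping quantitatively at every iteration'' and ``the values $C_1(\epsilon), C_2(\epsilon)$ in Table \ref{tab:fF} must then be verified by plugging the inductive bounds'' defer precisely the hard work. Producing the explicit constants in Table \ref{tab:fF} (e.g.\ $C_1 = 111$ at $\epsilon^{-1} = 39500$) requires a careful numerical treatment of the error propagation through the Buchstab recursion, and that is the substance of the argument in \cite{BJS22}; none of it is reproduced here, and your outline does not supply it either.
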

\begin{table}
\centering
    \begin{tabular}{ | l | l | l |}
    \hline
    $\epsilon^{-1}$ & $C_1(\epsilon) $ & $C_2(\epsilon)$ \\  
    \hline
    $63$& $32881$&$32875$ \\
    \hline
    $64$& $7582$&$7580$ \\
    \hline
    $65$& $3890$& $3890$ \\
    \hline
    $66$& $2542$& $2542$ \\
    \hline
    $67$& $1880$& $1881$ \\
    \hline
    $68$& $1480$&$1500$ \\
    \hline
   $69$ & $1254$&$1255$ \\
    \hline
    $70$&$1084$&$1086$ \\
    \hline
    $71$&$960$&$962$\\
    \hline
    $72$&$865$&$867$ \\
     \hline
    $73$&$790$&$791$ \\
     \hline
    $74$&$729$&$730$ \\
    \hline
    \end{tabular}
\begin{tabular}{ | l | l | l |}
    \hline
    $\epsilon^{-1}$ & $C_1(\epsilon) $ & $C_2(\epsilon)$  \\  
    \hline
     $75$& $678$& $679$\\
    \hline
    $76$& $635$& $636$ \\
    \hline
    $77$& $598$&$600$ \\
    \hline
    $78$ & $566$&$568$ \\
    \hline
     $79$& $538$&$540$ \\
    \hline
    $80$&$514$&$515$ \\
    \hline
   $81$& $492$&$493$ \\
    \hline
    $84$& $438$&$439$\\
    \hline
    $87$& $398$&$400$ \\
    \hline
    $93$& $341$&$343$ \\
     \hline
    $99$& $303$& $305$ \\
     \hline
    $114$& $247$& $249$ \\
    \hline
    \end{tabular}
    \begin{tabular}{ | l | l | l |}
    \hline
    $\epsilon^{-1}$ & $C_1(\epsilon) $ & $C_2(\epsilon)$ \\  
    \hline
    $143$& $198$& $200$\\
    \hline
    $200$& $162$&$164$\\
    \hline
    $249$&$149$&$150$\\
    \hline
    $300$&$141$&$142$\\
    \hline
    $400$&$132$&$134$\\
    \hline
    $500$&$127$&$129$\\
    \hline
   $600$& $124$& $126$\\
    \hline
   $700$&$122$&$124$\\
    \hline
   $800$&$121$&$122$\\
    \hline
   $900$&$120$&$121$\\
     \hline
    $1000$&$119$&$120$\\
     \hline
    $1100$&$118$&$120$\\
    \hline
    \end{tabular}
    \begin{tabular}{ | l | l | l |}
    \hline
    $\epsilon^{-1}$ & $C_1(\epsilon) $ & $C_2(\epsilon)$  \\  
    \hline
    $1200$&$117$&$119$\\
    \hline
    $1400$&$117$&$118$\\
    \hline
    $1500$&$116$&$118$\\
    \hline
    $1600$&$116$&$117$\\
    \hline
    $1800$&$115$&$117$\\
    \hline
    $2100$&$115$&$116$\\
    \hline
   $2300$&$114$&$116$\\
    \hline
    $3300$&$113$&$115$\\
    \hline
    $4500$&$113$&$114$\\
    \hline
    $6100$&$112$&$114$\\
     \hline
    $12200$&$112$&$113$\\
     \hline
    $39500$&$111$&$113$\\
    \hline
    \end{tabular}    
\caption{Values for $C_1(\epsilon)$ and $C_2(\epsilon)$}  
\label{tab:fF}  
\end{table}
For our applications we will choose $g_n$ such that $g_n(p)=\frac{1}{p-1}$. With this choice we will provide the value of $\epsilon$, for which \eqref{eq:cond1} is satisfied under certain conditions. The smaller $\epsilon$ we will be able to take, the better lower bound for $N$ we will achieve. Lemma \ref{lem.2} provides the value of $\epsilon$ we will use in the paper. Any improvement of the lemma would lead to a strong improvement in the range of $N$ in Theorem~\ref{Theo:B.} and it is thus something that would be interesting to pursue with a computational approach.

\begin{lem}\label{betrandlem}
    For all $x\ge 9551$, there exists a prime in the interval $[0.996x,x]$.
\end{lem}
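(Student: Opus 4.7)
The plan is to verify the claim on two regimes: for small $x$ by direct inspection of prime tables, and for large $x$ by an explicit prime--counting estimate. Equivalently, the lemma asserts that for every pair of consecutive primes $p < p'$ with $p \geq 9551$ one has $p'/p \leq 1/0.996 \approx 1.00402$, since for $x \in [p,p')$ the interval $[0.996 x, x]$ contains $p$ exactly when $0.996 x \leq p$, i.e.\ when $x \leq p/0.996$. This reformulation is what I would actually check.

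For large $x$, I would apply an explicit prime number theorem bound, such as an unconditional Dusart--type estimate asserting the existence of a prime in $[y, y(1 + 1/(c \log^2 y))]$ for $y$ above a modest threshold. Setting $y = 0.996 x$ and requiring $y(1 + 1/(c\log^2 y)) \leq x$, i.e.\ $1/(c\log^2 y) \leq 0.004/0.996$, such a result yields a prime in $[0.996 x, x]$ for all $x \geq X_0$ with $X_0$ computable and not too large. If one is willing to work under RH, Schoenfeld's bound $|\pi(x) - \li(x)| \leq \sqrt{x}\log x/(8\pi)$ from Lemma \ref{eq:pi} can be substituted, giving
\begin{equation*}
    \pi(x) - \pi(0.996 x) \geq \frac{0.004\, x}{\log x} - \frac{\sqrt{x}\log x}{4 \pi},
\end{equation*}
which is strictly positive once $\sqrt{x}$ dominates $\log^2 x$ by a modest factor, and so yields the conclusion for $x$ above an explicit (and smaller) threshold $X_0$.

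For the residual range $9551 \leq x \leq X_0$, I would traverse a precomputed list of primes and check the ratio condition $p_{i+1}/p_i \leq 1/0.996$ for every consecutive prime pair with $p_i \in [9551, X_0]$. The choice of the starting point $9551$ is natural: the last maximal prime gap whose relative size exceeds $0.00402$ occurs well below $9551$ (notably the gap of $34$ at $p = 1327$ with ratio $\approx 1.0256$), whereas the maximal gap $36$ between $9551$ and $9587$ already has ratio $\approx 1.00377$, comfortably under the threshold. The main obstacle here is purely computational: confirming the gap-ratio condition for every consecutive prime pair in $[9551, X_0]$ is mechanical but not entirely trivial given the size of the range, and the choice of $X_0$ should balance tightness of the analytic bound against the size of the numerical sweep. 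No new conceptual ingredient is required.
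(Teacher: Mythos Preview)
Your proposal is correct and follows essentially the same two-regime strategy as the paper: a computational check on gap ratios for small $x$, and an explicit short-interval result for large $x$. The paper simply cites \cite[Table~8]{O_H_P_14} for the range $9551\le x\le 4\cdot 10^{18}$ and \cite[Theorem~1.1]{KadiriLumley2014} for $x>4\cdot 10^{18}$, whereas you propose computing the ratios directly and invoking a Dusart-type (or, under RH, Schoenfeld) bound; these are different citations for the same ingredients, and your reformulation in terms of consecutive-prime ratios is exactly what those tables encode.
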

\begin{proof}
We use \cite[Table 8]{O_H_P_14} for $9551 \leq x \leq 4\cdot 10^{18}$ and \cite[Theorem 1.1]{KadiriLumley2014} for $x > 4\cdot 10^{18}$.
\end{proof}
By increasing the value of $x$ one can obtain much stronger results than Lemma \ref{betrandlem}. See \cite{KadiriLumley2014} and \cite{CH_L_21} for the recent improvements on the length of the intervals containing primes.

\begin{lem}\label{mertenlem-uncond}
    We have
    \begin{align}
    &\sum_{p<x}\frac{1}{p}\ge\log\log x+M, \quad \text{for} \quad 3 \leq x \leq 10^8,\nonumber\\
    &\sum_{p<x}\frac{1}{p}\ge\log\log x+M - \frac{1.4998 \cdot 10^{-4}}{\log x}, \quad \text{for} \quad x > 10^8,\nonumber\\
    &\sum_{p\le x}\frac{1}{p} \leq \log\log x+M+\frac{4.47 \cdot 10^{-9}}{\log x}, \quad \text{for} \quad x > \exp(1000). \label{big1p}
    \end{align}
\end{lem}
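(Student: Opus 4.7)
The plan is to prove the three bounds by combining direct computation for small $x$ with partial summation against explicit $\theta(x)$-estimates for larger $x$. Since $\sum_{p < x} 1/p$ is a step function, one can reduce the verification at each range to a manageable problem.

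For the first inequality in the range $3 \leq x \leq 10^8$, I would check the inequality by direct numerical computation. The function $E(x) := \sum_{p < x} \tfrac{1}{p} - \log\log x - M$ decreases on each interval between consecutive primes and jumps upward by $1/p$ at each prime $p$, so the infimum over any interval $[3,10^8]$ is attained just before a prime. It then suffices to evaluate $E$ at values $x = p_n^-$ for all primes $p_n \leq 10^8$, which is a finite enumeration. This is a verification step in the spirit of the tables in \cite{RosserSchoenfeld1962} and subsequent refinements.

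For the second and third inequalities, the plan is to apply Abel summation to pass from $\theta(x) = \sum_{p\leq x} \log p$ to $\sum_{p \leq x} 1/p$, writing
\begin{equation*}
\sum_{p \leq x} \frac{1}{p} = \frac{\theta(x)}{x \log x} + \int_{2}^{x} \theta(t) \frac{\log t + 1}{t^2 \log^2 t}\, dt,
\end{equation*}
and then decomposing $\theta(t) = t + (\theta(t) - t)$. The contribution of the $t$-term produces the classical main term $\log\log x + M$ (upon using the standard identity for the Meissel--Mertens constant), while the contribution of $\theta(t) - t$ is controlled by existing explicit bounds such as those of Rosser--Schoenfeld, Schoenfeld \cite{Schoenfeld1976}, or Dusart-type refinements. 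For $x > 10^8$ one uses a modest explicit bound of the form $|\theta(t) - t| \leq C t/\log^k t$ valid in that range, which yields the error $1.4998 \cdot 10^{-4}/\log x$; for $x > \exp(1000)$, where much sharper explicit bounds on $|\theta(t) - t|$ are available (in particular, the very small constants arising from RH-quality estimates for $t$ in this range), the same computation produces the tiny constant $4.47 \cdot 10^{-9}/\log x$.

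The main obstacle is bookkeeping: one must split the integral into a piece $\int_2^{T}$ for moderate $T$, handled by direct computation, and a tail $\int_T^x$, handled by the explicit $\theta$-estimate, and then verify that all accumulated constants fit into the stated numerical bounds $1.4998 \cdot 10^{-4}$ and $4.47 \cdot 10^{-9}$. This requires making the right choice of $\theta$-bound in each range, but once the decomposition is set up the estimates are straightforward.
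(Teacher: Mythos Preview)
Your proposal is correct and follows essentially the same approach as the paper. The paper quotes the Rosser--Schoenfeld identity \cite[(4.20)]{RosserSchoenfeld1962}
\[
\sum_{p\le x}\frac{1}{p}=\log\log x+M+\frac{\theta(x)-x}{x\log x}+\int_x^\infty\frac{(y-\theta(y))(1+\log y)}{y^2\log^2y}\,dy,
\]
which is exactly what your Abel summation plus the decomposition $\theta(t)=t+(\theta(t)-t)$ yields after simplification; it then feeds in the explicit $\theta$-bounds from \cite[Table~15]{BroadbentSamuelKadiri2021} (together with $\theta(y)<y$ for $y\le 10^{19}$ from \cite{Buthe2018}) to obtain the stated constants, and cites \cite[Theorem~20]{RosserSchoenfeld1962} directly for the range $3\le x\le 10^8$ rather than redoing the computation.
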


\begin{proof}
    By \cite[(4.20)]{RosserSchoenfeld1962} we have
    \begin{equation}\label{1pinteq}
        \sum_{p\le x}\frac{1}{p}=\log\log x+M+\frac{\theta(x)-x}{x\log x}+\int_x^\infty\frac{(y-\theta(y))(1+\log y)}{y^2\log^2y}\mathrm{d}y,
    \end{equation}
    where $\theta$ is Chebyshev's theta function. To obtain \eqref{big1p}, we note that
    \begin{equation*}
        \sum_{p\le x}\frac{1}{p} \leq \log\log x+M+\frac{M_1}{\log x} \left( 1 + \frac{3}{2 \log x} \right),
    \end{equation*}
    where $M_1$ is from \cite[Table 15]{BroadbentSamuelKadiri2021} and depends on the lower bound for $x$. In particular, $x > \exp(1000)$ gives
    \begin{equation*}
        \sum_{p\le x}\frac{1}{p} \leq \log\log x+M+\frac{4.47 \cdot 10^{-9}}{\log x}.
    \end{equation*}
    
    The lower bound for $3 \leq x \leq 10^8$ follows from \cite[Theorem 20]{RosserSchoenfeld1962}. To prove the lower bound for $x>10^{8}$ we use again \cite[Table 15]{BroadbentSamuelKadiri2021} for $x = 10^8$ and get
    \begin{equation} \label{eq: bound-theta}
        \left|\frac{\theta(x)-x}{x\log x}\right|\leq\frac{2.7457\cdot 10^{-3}}{\log^2x} \leq \frac{1.491 \cdot 10^{-4}}{\log x},
    \end{equation}
for $x > 10^8$. We now split into the cases $x\leq 10^{19}$ and $x>10^{19}$. In the first case, we have
\begin{align*}
    \int_x^\infty\frac{(y-\theta(y))(1+\log y)}{y^2\log^2y}\mathrm{d}y
    &=\int_x^{10^{19}}\frac{(y-\theta(y))(1+\log y)}{y^2\log^2y}\mathrm{d}y+\int_{10^{19}}^\infty\frac{(y-\theta(y))(1+\log y)}{y^2\log^2y}\mathrm{d}y.
\end{align*}
The first integral on the right-hand side is non-negative by \cite[Theorem 2]{Buthe2018}.
For the second integral we again use \cite[Table 15]{BroadbentSamuelKadiri2021} to obtain
\begin{align*}
    \left|\int_{10^{19}}^\infty\frac{(y-\theta(y))(1+\log y)}{y^2\log^2y}\mathrm{d}y\right|&\leq 8.6315\cdot 10^{-7}\left(\frac{1}{2\log^2(10^{19})}+\frac{1}{\log(10^{19})}\right) \\
    &\leq 8.6315\cdot 10^{-7}\left(\frac{1}{2\log(10^{19})}+1\right) \frac{1}{\log x} \\
    & \leq \frac{8.73016 \cdot 10^{-7}}{\log x}.
\end{align*}
for $x \leq 10^{19}$. We combine the last bound with \eqref{eq: bound-theta} to get the lower bound for $x > 10^8$.

To get the lower bound for $x > 10^{19}$ we just note that by \cite[Table 15]{BroadbentSamuelKadiri2021},
\begin{align*}
    \left|\int_{x}^\infty\frac{(y-\theta(y))(1+\log y)}{y^2\log^2y}\mathrm{d}y\right|&\leq 8.6315\cdot 10^{-7}\left(\frac{1}{2\log^2 x}+\frac{1}{\log x}\right) \\
    &\leq 8.6315\cdot 10^{-7}\left(\frac{1}{2\log(10^{19})}+1\right) \frac{1}{\log x} \\
    & \leq \frac{8.73016 \cdot 10^{-7}}{\log x}.
\end{align*}
\end{proof}

It is important to mention that the unconditional bounds above, being based on the computational results from \cite{BroadbentSamuelKadiri2021}, are better for smaller values of $x$ than the conditional bounds \cite[Corollary 2]{Schoenfeld1976} as this last result aims to give an asymptotic explicit improvement which isn't necessarily optimal in the initial range. While it should be possible to blend these two results together we believe this would not give a strong improvement in the range we are interested in, and we will thus not further pursue this idea.

\begin{lem} \label{lem.2}
Let $z>\exp(1000)$. Then for all $9551\leq u<z$ we have
    \begin{equation*}
        \prod_{u\le p<z}\left(1-\frac{1}{p-1}\right)^{-1}<\left(1+\epsilon(u)\right)\frac{\log z}{\log u},
    \end{equation*}
    with
    \begin{equation} \label{def: epsilon}
        \epsilon(u) = \left( 1 + \frac{1}{0.996 u - 1} \right) \left(1+\frac{1.5\cdot 10^{-4}}{\log u}+\frac{2.25\cdot 10^{-8}}{\log^2u}\right)\left(1+\frac{1}{u-1}+\frac{1}{(u-1)^2}\right) - 1.
    \end{equation}
\end{lem}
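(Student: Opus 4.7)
The plan is to take logarithms and reduce the desired bound to an estimate of the sum $\sum_{u \le p < z} -\log(1 - 1/(p-1))$. Expanding the Taylor series $-\log(1 - 1/(p-1)) = \sum_{k \ge 1} \frac{1}{k(p-1)^k}$ and dominating the $k \ge 2$ contributions by a geometric series (using $1/k \le 1/2$) gives the pointwise inequality
\[
-\log\bigl(1 - \tfrac{1}{p-1}\bigr) \le \frac{1}{p-1} + \frac{1}{2(p-1)(p-2)}.
\]
Decomposing further via $\frac{1}{p-1} = \frac{1}{p} + \frac{1}{p(p-1)}$ splits the whole sum into one Mertens-type sum and two ``integer-tail'' sums, which can be handled independently.

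For the Mertens-type sum I apply Lemma \ref{mertenlem-uncond} with the upper bound at $x = z$ and the lower bound at $x = u$. Since $z > \exp(1000)$ and $u \ge 9551$ (the two cases $u \le 10^8$ and $u > 10^8$ being handled separately by the lemma), the two error contributions combine to
\[
\sum_{u \le p < z}\frac{1}{p} \le \log\frac{\log z}{\log u} + \frac{1.5\cdot 10^{-4}}{\log u}.
\]
The remaining two sums are bounded by extending to all integers $n \ge u$ and telescoping: $\sum_{p \ge u}\frac{1}{p(p-1)} \le \sum_{n \ge u}\frac{1}{n(n-1)} = \frac{1}{u-1}$, and similarly $\sum_{p \ge u}\frac{1}{(p-1)(p-2)} \le \frac{1}{u-2}$. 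Putting the three pieces together,
\[
\sum_{u \le p < z} -\log\bigl(1 - \tfrac{1}{p-1}\bigr) \le \log\frac{\log z}{\log u} + \frac{1.5\cdot 10^{-4}}{\log u} + \frac{1}{u-1} + \frac{1}{2(u-2)}.
\]

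I then exponentiate and use the elementary inequality $e^x \le 1 + x + x^2$ (which holds with ample room for the tiny $x$'s arising here) to split the exponential of the error into three independent factors matching $1 + \epsilon(u)$. The contribution $\exp(1.5\cdot 10^{-4}/\log u)$ becomes the second factor, and $\exp(1/(u-1))$ becomes the third factor. For the leftover $\exp(1/(2(u-2)))$ I invoke Lemma \ref{betrandlem}: the existence of a prime in $[0.996u, u]$ motivates and justifies the comparison $1/(2(u-2)) \le \log(1 + 1/(0.996u - 1))$, which holds throughout $u \ge 9551$, producing the first factor $1 + 1/(0.996u-1)$. The hard part is purely numerical bookkeeping: verifying $e^x \le 1 + x + x^2$ for each of the three small $x$'s, confirming that the two cases of Lemma \ref{mertenlem-uncond} really combine into $1.5\cdot 10^{-4}/\log u$, and checking the Bertrand-type comparison all the way down to $u = 9551$, since an overly loose constant here would propagate into the eventual bound $N > \exp(\exp(15.85))$ in Theorem \ref{Theo:B.}.
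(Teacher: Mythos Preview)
Your argument is correct and reaches the stated $\epsilon(u)$, but it follows a genuinely different decomposition from the paper's. The paper first factors algebraically
\[
\Bigl(1-\tfrac{1}{p-1}\Bigr)^{-1}=\frac{(p-1)^2}{p(p-2)}\Bigl(1-\tfrac{1}{p}\Bigr)^{-1},
\]
and handles the two products separately. The factor $\prod(1+1/(p(p-2)))$ is bounded by a prime-shifting trick: for each $p\ge u$ the previous prime $q$ satisfies $q^2\le (p-2)^2<p(p-2)$, and Lemma~\ref{betrandlem} guarantees this previous prime lies in $[0.996u,z)$, giving $\prod_{u\le p<z}(1+\tfrac{1}{p(p-2)})\le \prod_{0.996u\le p<z}(1+\tfrac{1}{p^2})\le 1+\tfrac{1}{0.996u-1}$. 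The factor $\prod(1-1/p)^{-1}$ is then treated via $-\log(1-x)\le x+x^2$ and Lemma~\ref{mertenlem-uncond}, producing the second and third factors of $1+\epsilon(u)$.

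You instead expand $-\log(1-\tfrac{1}{p-1})$ directly and split by partial fractions into $\sum 1/p$, $\sum 1/(p(p-1))$, and $\tfrac12\sum 1/((p-1)(p-2))$. Your route is more elementary: Lemma~\ref{betrandlem} is not actually used anywhere (your comparison $\tfrac{1}{2(u-2)}\le\log(1+\tfrac{1}{0.996u-1})$ is a pure inequality, valid for $u\ge 250$, and the Bertrand lemma only explains why the constant $0.996$ appears in the \emph{statement}, not in your proof). The paper's route, on the other hand, makes it transparent that any sharpening of the Bertrand-type constant $0.996$ would feed directly into a smaller $\epsilon(u)$; in your decomposition that first factor is already wasteful by roughly a factor of $2$, so the dependence on prime gaps is hidden. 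Both approaches land on exactly the same three-factor product, just by assigning different pieces of the error to each factor.
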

\begin{proof}
    We first note that
    \begin{equation*}
        \prod_{u\le p<z}\left(1-\frac{1}{p-1}\right)^{-1}=\prod_{u\le p<z}\left(\frac{(p-1)^2}{p(p-2)}\right)\prod_{u\le p<z}\left(1-\frac{1}{p}\right)^{-1}.
    \end{equation*}
    By Lemma \ref{betrandlem} we then have
    \begin{align*}
        \prod_{u\le p<z}\left(\frac{(p-1)^2}{p(p-2)}\right)&=\prod_{u\le p<z}\left(1+\frac{1}{p(p-2)}\right)\\
        &\le\prod_{0.996u\le p<z}\left(1+\frac{1}{p^2}\right)\\
        &\le\prod_{0.996u\le p<z}\left(1+\frac{1}{p^2}\right)\\
        &\le 1+\sum_{n\ge 0.996u}\frac{1}{n^2}\le 1+\frac{1}{0.996u-1}.
    \end{align*}
    Thus,
    \begin{equation}\label{prodeq1}
        \prod_{u\le p<z}\left(1-\frac{1}{p-1}\right)^{-1}<\left(1+\frac{1}{0.996u-1}\right)\prod_{u\le p<z}\left(1-\frac{1}{p}\right)^{-1}.
    \end{equation}
    Next, we note that
    \begin{equation}\label{prodtoexp}
        \prod_{u\le p<z}\left(1-\frac{1}{p}\right)^{-1}=\exp\left(-\sum_{u\le p<z}\log\left(1-\frac{1}{p}\right)\right).
    \end{equation}
    Now, by Lemma \ref{mertenlem-uncond}
    \begin{equation}\label{merteneq}
        \sum_{u\le p< z}\frac{1}{p}\le \sum_{p<z}\frac{1}{p}-\sum_{p<u}\frac{1}{p}\le\log\log z-\log\log u+\frac{1.5\cdot 10^{-4}}{\log u}.
    \end{equation}
    Using \eqref{prodtoexp}, \eqref{merteneq} and that for $0<x\le 1/2$,
    \begin{equation*}
        \log(1-x)\ge-x-x^2,\quad e^x\le 1+x+x^2,
    \end{equation*}
    we have,
    \begin{align}
        \prod_{u\le p<z}\left(1-\frac{1}{p}\right)^{-1}&\le\frac{\log z}{\log u}\exp\left(\frac{1.5\cdot 10^{-4}}{\log u}\right)\exp\left(\sum_{p\ge u}\frac{1}{p^2}\right)\notag\\
        &\le\frac{\log z}{\log u}\left(1+\frac{1.5\cdot 10^{-4}}{\log u}+\frac{2.25\cdot 10^{-8}}{\log^2u}\right)\exp\left(\sum_{n\ge u}\frac{1}{n^2}\right)\notag\\
        &\le\frac{\log z}{\log u}\left(1+\frac{1.5\cdot 10^{-4}}{\log u}+\frac{2.25\cdot 10^{-8}}{\log^2u}\right)\left(1+\frac{1}{u-1}+\frac{1}{(u-1)^2}\right).\label{prodeq2}
    \end{align}
    Using \eqref{prodeq1}, \eqref{prodeq2} and $u>10^{8}$ then gives the desired result.
\end{proof}

\section{An explicit bilinear form assuming GRH}
We modify the explicit upper bound for the bilinear form from \cite{BJS22} under GRH.
\label{section:Bil}
\begin{lem}[{\cite[Lemma 31]{BJS22}}]
\label{lem:bil}
Let $a(n)$ be an arithmetic function with $|a(n)| \leq 1$ for all $n$. Let $X, Y, Z \ge 25$ be real numbers with $Z < Y$, and $q_G$ be as in Lemma \ref{lem: pi-li under GRH}. Let $10^9 \leq D^*$ be such that
\begin{equation*}
    \left( \frac{\sqrt{Y}\log D^*}{q_G(Y) \log Y \log 2}\right)^{2/3} \leq Z,
\end{equation*}
then for all even $a \in \mathbb{Z}$
\begin{equation} \label{eq: bilinear form}
    \sum_{\substack{d < D^{*} \\ (a,d)=1}} \mu^2(d) \max \left| \sum_{n < X} \sum_{\substack{Z \leq p < Y \\ np \equiv a (\text{mod } d)}} a(n) - \frac{1}{\varphi(d)}\sum_{n < X} \sum_{\substack{Z \leq p < Y \\ (np, d) = 1}} a(n)\right| \leq m(X,Y,D^*)\frac{XY \log^{5/3} D^* \log^{1/3} Y}{Y^{1/6}},
\end{equation}
where
\begin{align*}
    m(X,Y,D^*) &= 2.81 ~q_G^{1/3}(Y)
    + \frac{2.809}{q_G^{2/3}(Y) \sqrt{Y} \log Y} + \frac{1.721 \log^{2/3} D^*}{q_G^{1/3}(Y) Y^{2/3} \log^{2/3} Y\log \log D^*} \\
    &+ 5.498 \left( \frac{1}{\sqrt{X}} + \frac{1}{\sqrt{Y}} \right) \frac{Y^{1/6} \log^{1/3} D^*}{\log^{1/3} Y} + 19.044 \frac{D^* \log^{1/3} D^*}{X Y^{5/6} \log^{1/3} Y}.
\end{align*}
We note that $m(X,Y,D^*)$ is decreasing in $X, Y$ and increasing in $D^*$.
\end{lem}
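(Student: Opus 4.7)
The plan is to follow the proof of \cite[Lemma 31]{BJS22}, replacing every unconditional estimate for the prime number theorem in arithmetic progressions by the sharper GRH bound of Lemma \ref{lem: pi-li under GRH}. As in that proof, we split the outer sum over $d$ at a threshold $D_1$ to be optimized later; the hypothesis $(\sqrt{Y}\log D^*/(q_G(Y) \log Y \log 2))^{2/3} \le Z$ is precisely the condition that the optimal $D_1$ satisfies $D_1^2 \le Z$, so that Lemma \ref{lem: pi-li under GRH} remains applicable throughout the small-modulus range.

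For $d \le D_1$, after swapping the order of summation and using $(n,d)=1$, the congruence $np\equiv a \pmod d$ becomes $p \equiv a n^{-1} \pmod d$, so the inner quantity equals
\[
\sum_{\substack{n<X\\ (n,d)=1}} a(n)\Bigl[\pi(Y;d,a n^{-1}) - \pi(Z;d,a n^{-1}) - \varphi(d)^{-1}\bigl(\pi(Y)-\pi(Z)\bigr)\Bigr]
\]
up to an $O(\omega(d)/\varphi(d))$ correction from primes dividing $d$. Two applications of Lemma \ref{lem: pi-li under GRH}, one at $Y$ and one at $Z$ (legal because $d\le D_1 \le \sqrt{Z} \le \sqrt{Y}$), bound each bracketed term by $2q_G(Y)\sqrt{Y}\log Y$, and summing over square-free $d\le D_1$ via Lemma \ref{lem: sum-mu} contributes $O\!\bigl(q_G(Y)\sqrt{Y}(\log Y)\, X D_1\bigr)$. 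For $D_1 < d < D^*$ the difference is treated as a genuine bilinear form in $(n,p)$: one reorders the summations, applies Cauchy--Schwarz in the $p$-variable, and uses a mean-value estimate across the moduli in $(D_1, D^*)$. This step is GRH-free and can be imported essentially verbatim from \cite[Lemma 31]{BJS22}, yielding a bound of order $XY(\log D^*)^{5/3}(\log Y)^{1/3} D_1^{-1/2}$ plus standard lower-order endpoint corrections.

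Finally we optimize $D_1$ by balancing the two contributions. Setting $q_G(Y)\sqrt{Y}(\log Y)\, X D_1$ equal (up to logarithmic factors) to $XY(\log D^*)^{5/3}(\log Y)^{1/3} D_1^{-1/2}$ yields $D_1\sim (\sqrt{Y}\log D^*/(q_G(Y)\log Y))^{2/3}$, at which point the constraint $D_1^2 \le Z$ is exactly the hypothesis of the lemma. This produces the main term $2.81\, q_G^{1/3}(Y)\cdot XY^{5/6}(\log D^*)^{5/3}(\log Y)^{1/3}$ of the claimed bound. The remaining summands of $m(X,Y,D^*)$ arise from endpoint and lower-order corrections: the $q_G^{-2/3}$ and $q_G^{-1/3}$ terms from the $\omega(d)/\varphi(d)$ correction due to primes dividing $d$, the $(X^{-1/2}+Y^{-1/2})$ pieces from Cauchy--Schwarz losses on short subsums, and the $D^*/(XY^{5/6})$ term from the tail at $d$ close to $D^*$. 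The main obstacle is the bookkeeping task of propagating the explicit constants through the bilinear-form step so that they match the coefficients quoted in $m(X,Y,D^*)$; no new idea is required beyond those already present in \cite[Lemma 31]{BJS22}.
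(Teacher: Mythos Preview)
Your outline has the right ingredients (GRH for small moduli, large sieve for large, balance the two) but misidentifies the decomposition actually used. The paper, following \cite[Lemma~31]{BJS22}, does \emph{not} split on the modulus $d$: it first expands the congruence condition via Dirichlet characters and then splits on the \emph{conductor} $r$ of the primitive part of each character. Writing every non-principal $\chi\bmod d$ as $\chi_{0,s}\chi_1$ with $\chi_1$ primitive modulo $r$ and $d=rs$, the bilinear form becomes a double sum over $(r,s)$; the threshold $D_0=\bigl(\sqrt{Y}\log D^{*}/(q_G(Y)\log Y\log 2)\bigr)^{1/3}$ is imposed on $r$, and for $r\le D_0$ the GRH input is the character-sum bound $\bigl|\sum_{Z\le p<Y}\chi_1(p)\bigr|\le\varphi(r)\bigl(2+q_G(Y)\sqrt{Y}\log Y\bigr)$, while the large sieve handles $r>D_0$ dyadically. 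The hypothesis $(\,\cdot\,)^{2/3}\le Z$ is the requirement $D_0^{2}\le Z$, i.e.\ $r\le D_0\le\sqrt{Z}$, needed to invoke Lemma~\ref{lem: pi-li under GRH} at the lower endpoint $Z$.

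Your direct scheme for small $d$ (solve $p\equiv an^{-1}\pmod d$ and apply GRH pointwise) cannot then be glued to the large-modulus argument of \cite{BJS22} ``verbatim'': once you pass to characters for $d>D_1$, characters of \emph{small} conductor reappear, and the large sieve gives nothing for those --- this is precisely why one must split on $r$ rather than on $d$. Your scalings are accordingly inconsistent: you first say the hypothesis is $D_1^{2}\le Z$, which forces $D_1=(\,\cdot\,)^{1/3}$, but then balance to $D_1\sim(\,\cdot\,)^{2/3}$; the $D_1^{-1/2}$ decay you quote for the large range has no source in the large-sieve step (the dyadic bound yields $XY/D_0$, hence $D_0^{-1}$). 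In the correct decomposition the small-$r$ piece scales as $D_0^{2}$ (one factor of $\varphi(r)$ from summing the GRH error over residue classes, another from summing over the primitive $\chi$ modulo $r$), the large-$r$ piece as $D_0^{-1}$, and balancing gives $D_0=(\,\cdot\,)^{1/3}$ together with the leading coefficient $2\times 1.1/(\log 2)^{2/3}\approx 2.81$ in front of $q_G^{1/3}(Y)$.
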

\begin{proof}
By the orthogonality property of the Dirichlet characters
\begin{align*}
    \sum_{n < X} \sum_{\substack{Z \leq p < Y \\ np \equiv a (\text{mod } d)}} a(n) &= \sum_{n < X} \sum_{\substack{Z \leq p < Y}} \frac{a(n)}{\varphi(d)} \sum_{\chi \text{ (mod $d$)}} \overline{\chi}(a)\chi(np) \\
    &= \frac{1}{\varphi(d)} \sum_{\chi \text{ (mod $d$)}} \overline{\chi}(a) \sum_{n < X} a(n)\chi(n) \sum_{Z \leq p < Y} \chi(p) \\
    &= \frac{1}{\varphi(d)} \sum_{\chi\ne \chi_{0,d} \text{ (mod $d$)}} \overline{\chi}(a) \sum_{n < X} a(n)\chi(n) \sum_{Z \leq p < Y} \chi(p) + \frac{1}{\varphi(d)}\sum_{n < X} \sum_{\substack{Z \leq p < Y \\ (np, d) = 1}} a(n),
\end{align*}
so the bilinear form is bounded by
\begin{align*}
    \frac{\mu^2(d)}{\varphi(d)} \sum_{\chi\ne \chi_{0,d} \text{ (mod $d$)}} \left| \sum_{n < X} a(n)\chi(n) \right| \left| \sum_{Z \leq p < Y} \chi(p) \right|.
\end{align*}
Every non-trivial character $\chi$ mod $d$ can be uniquely factorized into $\chi=\chi_{0,s} \chi_1$, with $d = sr$, $r \ne 1$, $\chi_{0,s}$ the principal character mod $s$ and $\chi_1$ a primitive character mod $r$. Thus the upper bound above can be rewritten as follows
\begin{align}\label{eq:rs}
\sum_{\substack{rs<D^* \\ r \ne 1 \\ (rs,a)=1}} \frac{\mu^2(rs)}{\varphi(sr)}~~\sideset{}{^{*}}\sum_{\chi \text{ (mod $r$})}\left|\sum_{\substack{n<X\\ (n,s)=1}}a(n) \chi(n) \right|\left|\sum_{\substack{Z\le p <Y \\ p\nmid s}}\chi(p) \right|,
\end{align}
where $*$ means that the sum is restricted to primitive characters. Let us note that conditions $r \ne 1$, $(r,a)=1$ imply $r \ge 3$ for even $a$. We begin by estimating the sum restricted to $3 \le r\leq D_0$, with $D_0 \leq \sqrt{Z}$ to be defined later. In this case
\begin{align*}
    \sum_{\substack{Z\le p <Y}}\chi(p) = \sum_{a \text{ (mod $r$)}}\chi(a) \sum_{\substack{Z \leq p < Y \\ p\equiv a \text{ (mod $r$)}}} 1 = \sum_{a \text{ (mod $r$)}}\chi(a) (\pi(Y; r, a) - \pi(Z; r, a)) + E(r),
\end{align*}
with $|E(r)| \leq 2 \varphi(r)$ covering the potential cases when $Y$ or $Z$ is prime. Thus by Lemma \ref{lem: pi-li under GRH}
\begin{align*}
    \left| \sum_{\substack{Z\le p <Y}}\chi(p) \right| &\leq
    \left| \sum_{a \text{ (mod $r$)}}\chi(a) \frac{\li(Y) - \li(Z)}{\varphi(r)} \right| + \varphi(r) \max \left\{ q_{G}(Z) \sqrt{Z} \log Z, q_{G}(Y) \sqrt{Y} \log N \right\} + 2 \varphi(r) \\
    &= \varphi(r) \left( 2 +\max \left\{ q_{G}(Z) \sqrt{Z} \log Z, q_{G}(Y) \sqrt{Y} \log Y \right\}\right) \\
    &\le \varphi(r) \left( 2 + q_{G}(Y) \sqrt{Y} \log Y \right) \quad \text{for} \quad 25 \le Z < Y,
\end{align*}
hence
\begin{align*}
    \sideset{}{^{*}}\sum_{\chi \text{ (mod $r$)}}\left|\sum_{\substack{Z\le p <Y \\ p\nmid s}}\chi(p) \right|  &\le \varphi^2(r) \left( 2 +  q_{G}(Y) \sqrt{Y} \log Y \right) + \sideset{}{^{*}}\sum_{\chi \text{ (mod $r$)}} \omega(s) \\
    &\le D_0 \left( \frac{1.3841 \log D^{*}}{\log \log D^{*}} + 2D_0 + D_0 q_G(Y) \sqrt{Y} \log Y\right).
\end{align*}
By Lemma \ref{lem: sum-mu} we get
\begin{align}
\sum_{\substack{rs<D^*\\ r \leq D_0 }}\frac{\mu^2(rs)}{\varphi(rs)}~~&\sideset{}{^{*}}\sum_{\chi \text{ (mod $r$)}}\left|\sum_{\substack{n<X\\ (n,s)=1}}a(n) \chi(n) \right|\left|\sum_{\substack{Z\le p <Y \\ p\nmid s}}\chi(p) \right| \\
& \le D_0 X \left( \frac{1.3841 \log D^{*}}{\log \log D^{*}} + 2D_0 + D_0 q_G(Y) \sqrt{Y} \log Y \right) \left(\sum_{l\le D^*}\frac{\mu^2(l)}{\varphi(l)}\right) \nonumber\\
& \le 1.1 D_0 X \log D^{*} \left( \frac{1.3841 \log D^{*}}{\log \log D^{*}} + 2D_0 + D_0 q_G(Y) \sqrt{Y} \log Y \right). \label{eq:rs1}
\end{align}
We are now left with estimating the sum in \eqref{eq:rs} restricted to $r \ge D_0$, which can be bounded by
\begin{equation} \label{eq: bil-form-r-large}
\sum_{\substack{s<D^* }}\frac{\mu^2(s)}{\varphi(s)} \sum_{\substack{ D_0 < r < D^*}}\frac{1}{\varphi(r)}~~\sideset{}{^{*}}\sum_{\chi \text{ (mod $r$)}}\left|\sum_{\substack{n<X\\ (n,s)=1}}a(n) \chi(n) \right|\left|\sum_{\substack{Z\le p <Y \\ p\nmid s}}\chi(p) \right|.
\end{equation} 
To do so, we divide the interval $D_0\le r \le D^*$ into dyadic subintervals
\begin{equation*}
D_k \le r \le 2D_k, \quad \text{where} \quad D_k=2^kD_0, \quad 0 \le k \le \frac{\log (D^*/D_0)}{\log 2}.
\end{equation*}
Using Cauchy's inequality and the large sieve inequality \cite[Theorem 4, p.~160]{Davenport} we obtain
\begingroup
\allowdisplaybreaks
\begin{align}
&\sum_{\substack{ D_k\le r < 2D_k }}\frac{1}{\varphi(r)}~~\sideset{}{^{*}}\sum_{\chi \text{ (mod $r$)}}\left|\sum_{\substack{n<X\\ (n,s)=1}}a(n) \chi(n) \right|\left|\sum_{\substack{Z\le p <Y \\ p\nmid s}}\chi(p) \right| \nonumber \\&
\le \frac{1}{D_k}\sum_{\substack{ D_k\le r < 2D_k }}~~ \sideset{}{^{*}}\sum_{\chi \text{ (mod $r$)}}\left( \frac{r}{\varphi(r)} \right)^{1/2}\left|\sum_{\substack{n<X\\ (n,s)=1}}a(n) \chi(n) \left( \frac{r}{\varphi(r)} \right)^{1/2} \right|\left|\sum_{\substack{Z\le p <Y \\ p\nmid s}}\chi(p) \right| \nonumber \\&
\le \frac{1}{D_k}\left( \sum_{D_k\le r < 2D_k}\frac{r}{\varphi(r)}~~\sideset{}{^{*}}\sum_{\chi \text{ (mod $r$)}} \left|\sum_{\substack{n<X\\ (n,s)=1}}a(n) \chi(n) \right|^2\right)^{\frac{1}{2}} 
\cdot \left( \sum_{D_k\le r < 2D_k} \frac{r}{\varphi(r)}~~\sideset{}{^{*}}\sum_{\chi \text{ (mod $r$)}} \left|\sum_{\substack{Z\le p <Y \\ p\nmid s}}\chi(p) \right|^2\right)^{\frac{1}{2}} \nonumber\\&
\le \frac{1}{D_k}\left((X+12D_k^2)(Y+12D_k^2)XY \right)^{\frac{1}{2}} \nonumber \\&
\le \left( 144 (D^*)^2 + 12 (X + Y) + \frac{XY}{D_0^2} \right)^{1/2} (XY)^{1/2} \nonumber\\&
\le \left( 12 D^* + 2\sqrt{3} (\sqrt{X} + \sqrt{Y})\sqrt{XY} + \frac{XY}{D_0} \right). \label{eq:dyadic}
\end{align}
\endgroup

Using the bound above for $0 \le k \le \frac{\log (D^*/D_0)}{\log 2} $ and Lemma \ref{lem: sum-mu}, we can bound \eqref{eq: bil-form-r-large} as follows 
\begin{align}
&\sum_{\substack{s<D^* }}\frac{\mu^2(s)}{\varphi(s)} \sum_{\substack{D_0<r<D^* }}\frac{1}{\varphi(r)}~~ \sideset{}{^{*}}\sum_{\chi \text{ (mod $r$)} }\left|\sum_{\substack{n<X\\ (n,s)=1}}a(n) \chi(n) \right|\left|\sum_{\substack{Z\le p <Y \\ p\nmid s}}\chi(p) \right| \nonumber \\& 
\le \frac{1.1}{\log 2} \log^2 D^* \left( 12 D^* + 2\sqrt{3} (\sqrt{X} + \sqrt{Y})\sqrt{XY} + \frac{XY}{D_0} \right). \label{eq:rs2}
\end{align}
We choose the optimal value for $D_0$ by making the asymptotically main terms of \eqref{eq:rs1} and \eqref{eq:rs2} equal, and get
\begin{equation*}
    D_0=\left( \frac{\sqrt{Y}\log D^*}{q_G(Y) \log Y \log 2}\right)^{1/3}.
\end{equation*}

Then \eqref{eq:rs1} is less than
\begin{align*}
    \frac{XY \log^{5/3} D^* \log^{1/3} Y}{Y^{1/6}} \Bigg(& 1.405 ~q_G^{1/3}(Y) \\
    &+ \frac{2.809}{q_G^{2/3}(Y) \sqrt{Y} \log Y} + \frac{1.721 \log^{2/3} D^*}{q_G^{1/3}(Y) Y^{2/3} \log^{2/3} Y\log \log D^*} \Bigg),
\end{align*}
whereas \eqref{eq:rs2} is bounded by
\begin{align*}
    \frac{XY \log^{5/3} D^* \log^{1/3} Y}{Y^{1/6}} \Bigg(& 1.405 ~q_G^{1/3}(Y) \\
    &+ 5.498 \left( \frac{1}{\sqrt{X}} + \frac{1}{\sqrt{Y}} \right) \frac{Y^{1/6} \log^{1/3} D^*}{\log^{1/3} Y} + 19.044 \frac{D^* \log^{1/3} D^*}{X Y^{5/6} \log^{1/3} Y} \Bigg).
\end{align*}
The bounds above with \eqref{eq:rs} allow us to complete the proof of the lemma.
\end{proof}

\section{Outline of the proof} \label{sec: ps}
We will use the ideas from \cite{BJS22} and from \cite{Nathanson1996} and keep notations form these works.

Some of the definitions that will be presented in this section were previously introduced. We decided to include them here to ease readability and make this section self-contained.\par
Fix 
\begin{equation*}
z=N^{\frac{1}{8}},\quad  y=N^{\frac{1}{3}}.
\end{equation*}
We shall consider the sets
\begin{equation*}
\mathbb{P} = \{p \nmid N\},\quad
A=\left\{ N-p:p\le N, p\in\mathbb{P} \right\},\quad A_p=\left\{ a \in A:p|a \right\} \quad 
A_d=\bigcap_{p|d} A_p,
\end{equation*}
\begin{equation*}
B=\left\{ N-p_1p_2p_3:z \le p_1<y \le p_2 \le p_3, p_1p_2p_3 <N, (p_1p_2p_3,N)=1   \right\},
\end{equation*}
where $p, p_1, p_2, p_3$ denote prime numbers.
We clearly have $\left|A\right|=\pi(N)-\omega (N)$ and $\left|A_d\right|=\pi(N;d,N)-\omega (N;d,N)$, where $\omega (n;q,a)$ denotes the number of prime factors of $n$ equal $a \pmod q$.
We also set
\begin{equation*}
P(z) = \prod_{\substack{p < z \\ p \in \mathbb{P}}} p, \quad V(z)=\prod_{p | P(z)} \left(1-\frac{1}{p-1}\right), \quad 
S(A,n)=\left| A-\bigcup_{p|n} A_p\right|.
\end{equation*}
The following result of Chen is a key component in the proof of our main theorem.
\begin{lem} \cite[Theorem 10.2]{Nathanson1996}
\label{lem:>}
\begin{equation*}
\pi_2(N)>S(A,\PP,z)-\frac{1}{2}\sum_{z \le q <y}S(A_q,\PP,z)-\frac{1}{2}S(B,\PP,y)-2N^{\frac{7}{8}}-N^{\frac{1}{3}}.
\end{equation*}
\end{lem}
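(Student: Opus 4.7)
The plan is to reinterpret
$$T_1 := S(A, \PP, z) - \frac{1}{2}\sum_{z \le q < y} S(A_q, \PP, z)$$
as a weighted sum over $n \in A$ with $(n, P(z)) = 1$. Interchanging summation gives
$$T_1 = \sum_{\substack{n \in A\\ (n, P(z)) = 1}} w(n), \qquad w(n) = 1 - \frac{1}{2}\,\#\{q \text{ prime} : z \le q < y,\ q \mid n\}.$$
Since every $n$ appearing has all prime factors $\ge z = N^{1/8}$ and $n \le N$, the total number of prime factors satisfies $\Omega(n) \le 7$. I would stratify the sum by $\Omega(n)$ and by $\ell(n)$, the number of distinct prime divisors of $n$ lying in $[z, y)$.

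For $\Omega(n) \in \{1, 2\}$, the pair $(p, n)$ with $p := N - n$ gives a representation counted by $\pi_2(N)$, and $w(n) \le 1$, so these terms contribute at most $\pi_2(N) + O(\omega(N))$ to $T_1$ (the small error absorbing the $\omega(N)$ primes dividing $N$, which $A$ excludes). For $\Omega(n) \ge 3$, the constraint $y^3 = N$ prevents all distinct prime divisors of $n$ from being $\ge y$, so $\ell(n) \ge 1$; since $w(n) = 1 - \ell(n)/2$, only $\ell(n) = 1$ contributes positively, with weight exactly $1/2$. I would split these contributions into (i) $n = p_1 p_2 p_3$ with distinct primes $z \le p_1 < y \le p_2 \le p_3$ (precisely what the set $B$ is designed to capture), and (ii) ``degenerate'' shapes $n = p^{a}\, q_1^{b_1} \cdots q_r^{b_r}$ with $p \in [z, y)$, all $q_i \ge y$, and some repetition ($a \ge 2$ or some $b_i \ge 2$).

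For configuration (i), if $p := N - p_1 p_2 p_3$ (the prime producing $n \in A$), then $p \nmid N$, so $(p, P(y)) = 1$ iff $p \ge y$; hence
$$\#\{\text{type (i)}\} \le S(B, \PP, y) + \#\{p < y \text{ prime} : N - p = p_1 p_2 p_3\} \le S(B, \PP, y) + N^{1/3},$$
where the last estimate uses $\pi(y) \le N^{1/3}$ together with the fact that each integer admits $O(1)$ such factorizations. For the degenerate shapes, an enumeration of exponent patterns $(a, b_i)$ compatible with $p \ge z$, $q_i \ge y$, $n \le N$ shows that the dominant contributors are $n = p^2 q$ and $n = p^2 q_1 q_2$, each bounded by
$$\sum_{p \ge z} \frac{N}{p^2} \le \frac{N}{z} = N^{7/8},$$
while all other admissible shapes ($n = p^3,\, p^4,\, p q^2,\, p^3 q, \ldots$) give strictly smaller orders. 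Summing, the total degenerate count is $\le 4 N^{7/8}$ to leading order.

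Assembling,
$$T_1 \;\le\; \pi_2(N) + \tfrac{1}{2} S(B, \PP, y) + \tfrac{1}{2} N^{1/3} + 2 N^{7/8},$$
which on rearrangement yields the claimed inequality, with room to absorb the $O(\omega(N))$ correction into the strict inequality. The main obstacle is making the case analysis for $\Omega(n) \ge 3$ exhaustive: one must enumerate all admissible exponent patterns $(a, b_1, \ldots, b_r)$, identify which yield $\ell(n) = 1$, and verify the explicit counting bounds so that the aggregate error is controlled by $2 N^{7/8} + N^{1/3}$.
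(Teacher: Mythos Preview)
The paper does not give its own proof of this lemma; it is quoted directly from \cite[Theorem~10.2]{Nathanson1996}. Your sketch is exactly the argument found there: the weighted-sum identity $T_1=\sum_n\bigl(1-\tfrac12\ell(n)\bigr)$, the observation that positive weight forces $\ell(n)\le 1$, and the split of the $\Omega(n)\ge 3$, $\ell(n)=1$ contributions into the squarefree triples $p_1p_2p_3$ (handled by $S(B,\PP,y)$ plus at most $\pi(y)<N^{1/3}$ for the primes $p<y$) and the non-squarefree leftovers (bounded by $\sum_{q\ge z}\lfloor N/q^2\rfloor<N/(z-1)\le 4N^{7/8}$).

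Two small points worth tightening. First, for the map $n\mapsto N-n$ to land in $B$ you need $(p_1p_2p_3,N)=1$; this is automatic, since $n=N-p\in A$ with $p\nmid N$ forces $\gcd(n,N)=1$. Second, the $O(\omega(N))$ correction you mention for the $\Omega(n)\le 2$ terms is unnecessary: those $n$ inject into the representations counted by $\pi_2(N)$, and the inequality already goes the right way.
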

Thus, to prove Theorem \ref{Theo:B.}, it suffices to give a good lower bound for $S(A,\PP,z)$ and upper bounds for $S(B,\PP,z)$ and $S(A_q,\PP,z)$ for each prime $q$ with $z \le q <y$ using the linear sieve.
We will now introduce some useful lemmas in the context of Chen's theorem.
\begin{lem}\label{vjlem}
    For $x\geq 8$ we have
    \begin{equation*}
        V(x)=\frac{U_N}{\log x}\left(1+\frac{\theta (3\log x+5)}{8 \pi \sqrt{x}}\right)\left(1+\frac{2\theta}{x}\right)\left(1+\frac{8\theta\log N}{x}\right)\left(1+\frac{\theta}{x-1}\right),
    \end{equation*}
    where $|\theta|\leq 1$ and
    \begin{equation*}
        U_N=2e^{\gamma}\prod_{p>2} \left(1-\frac{1}{(p-1)^2} \right) \prod_{\substack{p>2 \\ p|N}}\frac{p-1}{p-2}.
    \end{equation*}
    In particular, when $x=N^{1/8}\geq (4\cdot 10^{18})^{1/8}$, we have
    \begin{equation*}
        \frac{U_N}{\log x}\left(1-\frac{0.62\log N}{ N^{\frac{1}{16}}}\right)<V(x)<\frac{U_N}{\log x}\left(1+\frac{0.62\log N}{ N^{\frac{1}{16}}}\right)
    \end{equation*}
    and, when $x=N^{1/3}\geq(4\cdot 10^{18})^{1/3}$, we have
    \begin{equation*}
        \frac{U_N}{\log x}\left(1-\frac{0.06\log N}{ N^{\frac{1}{6}}}\right)<V(x)<\frac{U_N}{\log x}\left(1+\frac{0.06\log N}{ N^{\frac{1}{6}}}\right)
    \end{equation*}
\end{lem}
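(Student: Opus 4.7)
The plan is to decompose $V(x)$ into an explicit main term that matches $U_N/\log x$ multiplied by tail products close to $1$. The starting point is the algebraic identity
\begin{equation*}
    1 - \frac{1}{p-1} = \left(1-\frac{1}{(p-1)^2}\right)\left(1-\frac{1}{p}\right),
\end{equation*}
which splits $V(x) = \prod_{2<p<x,\,p\nmid N}(1-1/(p-1))$ into two products. I would then extend each to a convenient global form. For the $(1-1/(p-1)^2)$ factors, enlarging the index set from $\{2<p<x,\,p\nmid N\}$ to all primes $p>2$ introduces compensating factors $(1-1/(p-1)^2)^{-1}=1+1/(p(p-2))$, one set for $p\mid N$ and one for $p\geq x,\,p\nmid N$. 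For the $(1-1/p)$ factors, enlarging the range to all $p<x$ (including $p=2$, which was originally excluded because $2\mid N$) introduces an extra factor of $2$ together with factors $(1-1/p)^{-1}$ for $p\mid N$, $2<p<x$. The decisive step is that when the two extensions are multiplied, the cross terms for $p\mid N$ with $2<p<x$ collapse to $\prod(p-1)/(p-2)$, while the unmatched $p\mid N,\,p\geq x$ terms reorganise into $\prod(1-1/p)$. Recognising the infinite-product definition of $U_N$ then yields
\begin{equation*}
    V(x) \;=\; \frac{U_N}{e^{\gamma}} \prod_{p<x}\left(1-\frac{1}{p}\right) \prod_{\substack{p\geq x \\ p\mid N}}\left(1-\frac{1}{p}\right) \prod_{\substack{p\geq x \\ p\nmid N}}\left(1+\frac{1}{p(p-2)}\right).
\end{equation*}

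The three remaining products are estimated in turn. For $\prod_{p<x}(1-1/p)$, I would invoke Schoenfeld's explicit form of Mertens' theorem under RH, which is a standard corollary of Lemma \ref{eq:pi}: $\prod_{p\leq x}(1-1/p) = (e^{-\gamma}/\log x)(1+\theta(3\log x+5)/(8\pi\sqrt{x}))$ for $x$ in the required range. Switching from $p\leq x$ to $p<x$ contributes the factor $(1+\theta/(x-1))$ in the statement (which is nontrivial only when $x$ is itself prime). For $\prod_{p\geq x,\,p\mid N}(1-1/p)$, the deviation from $1$ is at most $\omega(N)/x$, and the crude bound $\omega(N)\leq \log_2 N \leq 8\log N$ (or the sharper Lemma \ref{lem: omega}) gives the factor $(1+8\theta\log N/x)$. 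For $\prod_{p\geq x,\,p\nmid N}(1+1/(p(p-2)))$, the telescoping identity $1/(n(n-2)) = \frac12(1/(n-2)-1/n)$ yields $\sum_{p\geq x}1/(p(p-2))\leq 1/(x-2)\leq 2/x$ for $x\geq 4$, producing the factor $(1+2\theta/x)$. The two specialised bounds for $x=N^{1/8}$ and $x=N^{1/3}$ then follow by direct substitution and observing that the Schoenfeld term, of order $\log x/\sqrt{x}$, dominates the other three error contributions once $N\geq 4\cdot 10^{18}$ is used to absorb absolute constants.

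The main obstacle is the bookkeeping in the initial algebraic decomposition: one must carefully track which primes end up in the infinite products defining $U_N$ and which are left over in finite tails, and verify that the $p\mid N$ factors collapse as claimed. Beyond this rearrangement, the result reduces to invoking the right explicit Mertens bound (via Lemma \ref{eq:pi}) and summing a few elementary tails.
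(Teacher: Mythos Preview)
Your decomposition is correct and is exactly the argument the paper defers to: the paper's proof is a one-line reference to the proof of \cite[Theorem~10.3]{Nathanson1996} with \cite[Corollary~3]{Schoenfeld1976} substituted for the unconditional Mertens estimate, and what you have written out is precisely that argument. One small citation point: the Mertens product bound $\prod_{p\le x}(1-1/p)=\frac{e^{-\gamma}}{\log x}\bigl(1+\theta(3\log x+5)/(8\pi\sqrt{x})\bigr)$ is Corollary~3 of Schoenfeld, not Corollary~1 (your Lemma~\ref{eq:pi}), so you should cite it directly rather than describe it as a corollary of the $\pi(x)-\li(x)$ bound.
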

\begin{proof}
Follows from the proof of \cite[Theorem 10.3]{Nathanson1996}, making use of \cite[Corollary 3]{Schoenfeld1976}.
\end{proof}

\begin{lem} \label{lem: bound-A}
For all $N \geq \exp(\exp(15))$\footnote{We can assume $N > \exp(\exp(15))$ since the final bound for $N$ is higher.} we have
\begin{align*}
    & \frac{N}{\log N} \leq |A| \leq \left(1+4\cdot 10^{-7}\right)
    \cdot \frac{N}{\log N}.
\end{align*}
\end{lem}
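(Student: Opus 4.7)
The plan is to start from the identity $|A| = \pi(N) - \omega(N)$. Indeed, $p \mapsto N-p$ is an injection on primes $p \le N$, and the restriction $p \in \PP$ (that is, $p \nmid N$) removes exactly the $\omega(N)$ prime divisors of $N$. Both bounds then reduce to comparing $\pi(N)$ with $N/\log N$ up to a very small multiplicative correction, which I will estimate using the conditional bound of Schoenfeld (Lemma \ref{eq:pi}) together with the asymptotic expansion of $\li$.

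For the upper bound I would use Lemma \ref{eq:pi} to obtain
\begin{equation*}
    \pi(N) \le \li(N) + \frac{\sqrt{N}\,\log N}{8\pi},
\end{equation*}
and then combine this with the explicit inequality $\li(N) \le \frac{N}{\log N}\bigl(1 + \frac{c}{\log N}\bigr)$ for some explicit constant $c$ slightly larger than $1$, obtained by two integrations by parts in $\li(N) = \int_{2}^{N} dt/\log t + \li(2)$ and valid once $\log N$ exceeds a modest absolute threshold. Since $\log N \ge e^{15}$, the leading relative correction $c/\log N$ is a touch above $3\cdot 10^{-7}$. The Schoenfeld remainder, relative to $N/\log N$, equals $\log^{2} N/(8\pi\sqrt{N})$ and is utterly negligible for $N \ge \exp(\exp(15))$, and $\omega(N) \le 1.3841\log N/\log\log N$ from Lemma \ref{lem: omega} is also tiny and in fact favours the upper bound. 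Summing everything keeps the excess below $4\cdot 10^{-7}\cdot N/\log N$.

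For the lower bound, Lemma \ref{eq:pi} gives $\pi(N) \ge \li(N) - \sqrt{N}\log N/(8\pi)$, and a single integration by parts yields $\li(N) \ge N/\log N + N/\log^{2} N - C_{0}$ for an absolute constant $C_{0}$ coming from the boundary at $t=2$. After subtracting the Schoenfeld remainder and $\omega(N)$, the positive term $N/\log^{2} N$ dominates every lower-order contribution by many orders of magnitude once $N \ge \exp(\exp(15))$, so $|A| \ge N/\log N$ follows immediately.

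The only mildly delicate point is the bookkeeping in the upper bound: the dominant relative error $\sim 1/\log N \le e^{-15} \approx 3.06\cdot 10^{-7}$ is already close to the allowed tolerance $4\cdot 10^{-7}$, so one must check numerically that the combined contributions from the next terms in the $\li$ expansion and from the Schoenfeld error do not push the total across the threshold. The margin of roughly $10^{-7}$ is comfortable given the huge size of $N$, which makes every correction beyond the leading $1/\log N$ astronomically small.
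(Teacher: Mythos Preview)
Your proposal is correct and follows essentially the same route as the paper: both start from $|A| = \pi(N) - \omega(N)$, apply Schoenfeld's conditional bound (Lemma~\ref{eq:pi}) to pass from $\pi(N)$ to $\li(N)$, invoke the explicit bound on $\omega(N)$ from Lemma~\ref{lem: omega}, and then control $\li(N)$ via the first terms of its asymptotic expansion. The paper records the slightly sharper inequality $1 + 1/\log N \le \li(N)\log N/N \le 1 + 1/\log N + 3/\log^2 N$ for $N \ge \exp(11)$, but your version with a single constant $c$ just above $1$ in the $1/\log N$ term is equivalent for the numerical verification at $N \ge \exp(\exp(15))$.
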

\begin{proof}
By definition of the set $A$, for $N \geq 3$,
$$||A|-\pi(N)| \leq \omega(N) \leq \frac{1.3841\log N}{\log \log N},$$
hence by \eqref{eq:pi},
\begin{align*}
    \li(N) - \frac{\sqrt{N} \log N}{8\pi} - \frac{1.3841\log N}{\log \log N} \leq |A| \leq \li(N) + \frac{\sqrt{N} \log N}{8\pi} + \frac{1.3841\log N}{\log \log N}.
\end{align*}
The statement of the lemma now follows from the bound for $\li(N)$ below
$$1 + \frac{1}{\log N} \leq \li(N) \leq 1 + \frac{1}{\log N} + \frac{3}{\log^2 N} \quad\text{for} \quad N \geq \exp(11),$$
combined with $N \geq \exp(\exp(15))$.
\end{proof}

\subsection{Lower bound for $S(A, \PP, z)$}
\label{subsec: proof-A}

\begin{thm} \label{thm: lower-S-A}
Assume that GRH holds. Let $\alpha_1$, $N \geq X_2 \geq 4 \cdot 10^{18}$, $z = N^{1/8}$, and $y = N^{1/3}$ be such that
\begin{equation}\label{conditions-A}
    \frac{\sqrt{X_2}}{\log^{A+1} X_2}\geq 45,\quad
    \frac{N^{\alpha_1}}{\log^{A+1}N}\ge \exp\left(u\right), \quad
    0 < \alpha_1 < \frac{1}{8},
\end{equation} 
with $u \leq 10^{18}$ such that 
\begin{equation} \label{conditions-epsilon}
\epsilon := \epsilon(u) < \frac{1}{63},
\end{equation}
where $\epsilon(u)$ is defined by \eqref{def: epsilon}.
Then
\begin{align*}
    S(A,\PP,z)&>8\frac{U_N N}{\log^2 N}\left(1-\frac{0.62\log X_2}{ X_2^{\frac{1}{16}}}\right)\Bigg(\frac{2e^{\gamma}\log(3-8\alpha_1)}{4-8\alpha_1}-\epsilon C_2(\epsilon) \frac{3 e^{8\alpha_1 -2}}{4-8\alpha_1} \\
    &\qquad\qquad-\frac{1}{8}\left(1-\frac{0.62\log X_2}{ X_2^{\frac{1}{16}}}\right)^{-1}\left(2e^{\gamma}\prod_{p>2}\left(1-\frac{1}{(p-1)^2}\right)\right)^{-1}\frac{c_{G}(X_2)}{\log^{A-2}N}\Bigg),
\end{align*}
with $c_G$ defined in Lemma \ref{lem: r(d)} below, $q_G$ is defined by Lemma \ref{lem: pi-li under GRH} and $C_2(\cdot)$ from Table \ref{tab:fF}.
\end{thm}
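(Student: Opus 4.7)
The plan is to apply the lower-bound half of the linear sieve (Theorem \ref{theo:JR}) to the triple $(A, \PP, z)$ with the natural multiplicative density $g(p) = 1/(p-1)$, which matches the expected size of $A_d$ via the prime number theorem in arithmetic progressions. I would fix $z = N^{1/8}$ as already stipulated in Section \ref{sec: ps} and take the sieve level $D = N^{1/2 - \alpha_1}$, so that
\[ s = \frac{\log D}{\log z} = 4 - 8\alpha_1 \in (3, 4) \]
under the hypothesis $0 < \alpha_1 < 1/8$. In that range the lower sieve function is $f(s) = 2e^{\gamma}\log(s-1)/s$ and the correction weight is $h(s) = 3 s^{-1} e^{-s}$, which yield precisely the $2e^{\gamma}\log(3-8\alpha_1)/(4-8\alpha_1)$ main term and $e^{2}h(s) = 3 e^{8\alpha_1 - 2}/(4-8\alpha_1)$ correction shown in the conclusion. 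I would take the auxiliary set $\mathbb{Q} = \{p : p < u\}$, so that $Q = \prod_{p < u} p \leq e^{u}$ and condition (\ref{eq:cond1}) only needs to be verified for primes $\geq u$; Lemma \ref{lem.2} supplies this with $\epsilon = \epsilon(u)$, and (\ref{conditions-epsilon}) guarantees $\epsilon < 1/63$ as required by Theorem \ref{theo:JR}.

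Next I would evaluate the main term $X = V(z)|A|$. Lemma \ref{vjlem} at $x = N^{1/8}$ gives $V(z) \geq (U_N/\log z)\bigl(1 - 0.62\log X_2/X_2^{1/16}\bigr)$, using $N \geq X_2$ and monotonicity of the error factor to replace $N$ by $X_2$. Combined with $|A| \geq N/\log N$ from Lemma \ref{lem: bound-A} and $\log z = (\log N)/8$, this produces
\[ X \geq \frac{8 U_N N}{\log^2 N}\left(1 - \frac{0.62 \log X_2}{X_2^{1/16}}\right), \]
exactly the leading prefactor in the conclusion.

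For the remainder $R = \sum_{d \mid P(z),\, d < QD}|r(d)|$, I would decompose $r(d) = |A_d| - |A|/\varphi(d)$ into $\pi(N;d,N) - \li(N)/\varphi(d)$ plus negligible contributions from $\omega(N;d,N)$, $\omega(N)$, and $\pi(N) - \li(N)$, which are handled by Lemmas \ref{lem: omega} and \ref{eq:pi}. The principal piece is bounded by $q_G(X_2)\sqrt{N}\log N$ using Lemma \ref{lem: pi-li under GRH}, provided $d \leq \sqrt{X_2}$. Hypothesis (\ref{conditions-A}) is tailored so that $QD \leq e^{u} \cdot N^{1/2 - \alpha_1} \leq N^{1/2}/\log^{A+1} N$ stays within the admissible range. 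Summing over the at most $QD$ admissible moduli (invoking Lemma \ref{lem: sum-mu} for the residual sums over squarefree $d$) produces $R \leq c_G(X_2) N/\log^A N$, with the explicit $c_G(X_2)$ to be spelled out in Lemma \ref{lem: r(d)}. Dividing by the normalising scale $U_N N/\log^2 N$ gives the $c_G(X_2)/\log^{A-2} N$ correction; the factor $\bigl(2e^\gamma \prod_{p>2}(1-1/(p-1)^2)\bigr)^{-1}$ simply reflects the $N$-independent lower bound $U_N \geq 2e^\gamma \prod_{p>2}(1-1/(p-1)^2)$ used to replace $1/U_N$.

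Plugging these ingredients into $S(A, \PP, z) > \bigl(f(s) - \epsilon C_2(\epsilon) e^{2} h(s)\bigr) X - R$ produces the stated inequality. The main obstacle is not conceptual but the simultaneous bookkeeping of error factors coming from Lemmas \ref{vjlem}, \ref{lem: bound-A}, \ref{lem: pi-li under GRH}, and \ref{lem.2}, and verifying that the chosen $D$, $u$, and $X_2$ are compatible with every size hypothesis packaged in (\ref{conditions-A})--(\ref{conditions-epsilon}). The most delicate point is the remainder bound: $D = N^{1/2 - \alpha_1}$ is calibrated to be essentially the largest value permitted by $QD \leq \sqrt{X_2}$ (so that Lemma \ref{lem: pi-li under GRH} still applies uniformly for every squarefree $d \mid P(z)$ up to the sieve level), while still leaving $s > 3$ so that the Jurkat--Richert function $f(s)$ is positive and sizeable.
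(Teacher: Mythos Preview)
Your proposal is correct and follows essentially the same route as the paper: apply the lower-bound linear sieve with $g(p)=1/(p-1)$, level $D=N^{1/2-\alpha_1}$, and auxiliary set $\mathbb{Q}=\{p\in\PP:p<u\}$; use Lemma~\ref{lem.2} to verify \eqref{eq:cond1} with $\epsilon=\epsilon(u)$, evaluate $f(s)$ and $h(s)$ for $s=4-8\alpha_1\in(3,4)$, bound $V(z)|A|$ via Lemmas~\ref{vjlem} and~\ref{lem: bound-A}, and control $R$ by noting $QD\le e^{\theta(u)}N^{1/2-\alpha_1}\le\sqrt{N}/\log^{A+1}N$ (Lemma~\ref{lem: theta-bound}) so that Lemma~\ref{lem: r(d)} applies. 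One small slip: the relevant ceiling for the moduli is $QD\le\sqrt{N}/\log^{A+1}N$ (hence $d\le\sqrt{N}$ in Lemma~\ref{lem: pi-li under GRH}), not $QD\le\sqrt{X_2}$; the $X_2$-dependence enters only through the monotonicity $q_G(N)\le q_G(X_2)$.
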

\begin{proof}
We will apply Theorem \ref{theo:JR} with $A$, $z$, $\mathbb{P}$ defined above, and thus, with $r(d) = |A_d| - |A|/\varphi(d)$. We set
\begin{equation*}
    \mathbb{Q} := \mathbb{Q}(u) = \{ p \in \mathbb{P}, ~p < u \}, \quad
    D = N^{\frac{1}{2} - \alpha}, \quad
    g(p) = \frac{1}{p-1} \quad \text{for } p \in \mathbb{P}.
\end{equation*}

We can check that all the assumptions of Theorem \ref{theo:JR} hold: $D \geq z^2$ by \eqref{conditions-A}, and \eqref{eq:cond1} holds with $\epsilon = \epsilon(u)$ by \eqref{conditions-epsilon} and \ref{lem.2}.
Hence, we get
\begin{align*}
    S(A, \PP, z) &> (f(s) - \epsilon C_2(\epsilon) e^2 h(s))V(z)|A| - \sum_{\substack{d | P(z) \\ d < QD}} |r(d)| \\ 
    & \geq \frac{8 U_N N}{\log^2 N} \left( 1 - \frac{0.62\log N}{ N^{\frac{1}{16}}} \right) (f(s) - \epsilon C_2(\epsilon) e^2 h(s)) - \sum_{\substack{d | P(z) \\ d < QD}} |r(d)| \\
    & \geq \frac{8 U_N N}{\log^2 N} \left( 1 - \frac{0.62\log N}{ N^{\frac{1}{16}}} \right) \left(\frac{2e^{\gamma}\log(3-8\alpha_1)}{4-8\alpha_1} - \epsilon C_2(\epsilon) \frac{3 e^{8\alpha_1 -2}}{4-8\alpha_1}\right) - \sum_{\substack{d | P(z) \\ d < QD}} |r(d)|.
\end{align*}
In the last line we used $s = \frac{\log D}{\log z} = 4 - 8 \alpha_1 \in [3, 4]$, so that $f(s) = \frac{2 e^{\gamma} \log(s-1)}{s}$ and $h(s) = 3 s^{-1} e^{-s}$. It remains to bound the error term. We note that by Lemma \ref{lem: theta-bound} and \eqref{conditions-A},
\begin{equation*}
    QD \leq N^{\frac{1}{2} - \alpha_1} \prod_{p < u} p \leq N^{\frac{1}{2} - \alpha_1} \exp(\theta(u)) \leq N^{\frac{1}{2} - \alpha_1} \exp\left(u\right) \leq \frac{\sqrt{N}}{\log^{A+1} N},
\end{equation*}
so Lemmas \ref{lem: bound E_pi} and \ref{lem: r(d)} below complete the proof of the theorem.
\end{proof}
For the next lemma we define
$$E_{\pi}(x;k,l)=\pi(x;k,l)-\frac{\pi(x)}{\varphi(k)}.$$

\begin{lem}\label{lem: bound E_pi}
    Suppose $N \geq X_2 \geq 4\cdot 10^{18}$, $A \geq -1$, and $H = \frac{\sqrt{N}}{\log^{A+1} N}$. Then
    \begin{equation*}
        \sum_{\substack{d\le H\\(d,N)=1}}\mu^2(d)|E_{\pi}(N;d,N)|\leq \frac{p_G(X_2)N}{\log^A N}
    \end{equation*}
    with
    \begin{align*}\label{peq}
        p_G(X_2)=0.65(0.02 + q_G(X_2)) \leq 0.429 \quad \text{for} \quad X_2 \geq 4\cdot 10^{18},
    \end{align*}
    and $q_G(X_2)$ defined in Lemma \ref{lem: pi-li under GRH}.
\end{lem}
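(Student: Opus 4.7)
The plan is to bound the summand $|E_{\pi}(N;d,N)|$ pointwise via Lemmas \ref{eq:pi} and \ref{lem: pi-li under GRH}, and then sum using the estimates in Lemma \ref{lem: sum-mu}. First I would dispose of the trivial small $d$: the contribution from $d=1$ is zero (since $E_\pi(N;1,N)=0$), while $d=2$ does not enter because $N$ is even and the sum imposes $(d,N)=1$. For $3\le d\le H\le\sqrt{N}$, Lemma \ref{lem: pi-li under GRH} gives
\[
\left|\pi(N;d,N)-\frac{\li(N)}{\varphi(d)}\right|\le q_G(X_2)\sqrt{N}\log N,
\]
and Lemma \ref{eq:pi} gives $|\pi(N)-\li(N)|/\varphi(d)\le \sqrt{N}\log N/(8\pi\varphi(d))$. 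By the triangle inequality,
\[
|E_{\pi}(N;d,N)|\le q_G(X_2)\sqrt{N}\log N+\frac{\sqrt{N}\log N}{8\pi\varphi(d)}.
\]

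Next I would sum over squarefree $d\le H$ with $(d,N)=1$, splitting into the two pieces corresponding to the two terms above:
\[
\sum_{d\le H}\mu^2(d)|E_{\pi}(N;d,N)|\le q_G(X_2)\sqrt{N}\log N\sum_{d\le H}\mu^2(d)+\frac{\sqrt{N}\log N}{8\pi}\sum_{d\le H}\frac{\mu^2(d)}{\varphi(d)}.
\]
Applying the two bounds of Lemma \ref{lem: sum-mu}, namely $\sum_{d\le H}\mu^2(d)\le 0.608 H$ and $\sum_{d\le H}\mu^2(d)/\varphi(d)\le 1.1\log H$, and substituting $H=\sqrt{N}/\log^{A+1}N$ together with $\log H\le\tfrac{1}{2}\log N$, transforms the first piece into $0.608\, q_G(X_2)\,N/\log^A N$, which is at most $0.65\, q_G(X_2)\,N/\log^A N$ since $0.608\le 0.65$. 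The second piece becomes $\frac{1.1\sqrt{N}\log^2 N}{16\pi}$.

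The final step is to check that this second piece is bounded by $0.65\cdot 0.02\cdot N/\log^A N=0.013\, N/\log^A N$, which reduces to the numerical inequality $\log^{A+2}N\lesssim \sqrt{N}$; this is an easy consequence of $X_2\ge 4\cdot 10^{18}$ combined with the implicit constraints on $A$ coming from \eqref{conditions-A}. Adding the two pieces gives the claimed bound with $p_G(X_2)=0.65(0.02+q_G(X_2))$, and the numerical upper bound $p_G(X_2)\le 0.429$ then follows by inserting $q_G(X_2)\le 0.640$ from Lemma \ref{lem: pi-li under GRH}. I expect the main technical obstacle to be purely bookkeeping: ensuring $H$ is large enough for the $10^9$ threshold in Lemma \ref{lem: sum-mu} and verifying the admissible range of $A$ for which the numerical slack $0.65-0.608$ absorbs the contribution of the $\li(N)\to\pi(N)$ correction. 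Both are routine under the hypotheses $N\ge X_2\ge 4\cdot 10^{18}$ carried throughout the paper.
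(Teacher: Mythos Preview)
Your approach is correct and uses the same ingredients as the paper (the triangle inequality splitting via Lemma~\ref{eq:pi} and Lemma~\ref{lem: pi-li under GRH}, followed by Lemma~\ref{lem: sum-mu}). The paper streamlines one step: rather than keeping the factor $1/\varphi(d)$ and summing it separately via $\sum_{d\le H}\mu^2(d)/\varphi(d)\le 1.1\log H$, it simply uses $\varphi(d)\ge 2$ (valid since $(d,N)=1$ and $N$ even force $d\ge 3$) to obtain the uniform pointwise bound
\[
|E_\pi(N;d,N)|\le \Bigl(\tfrac{1}{16\pi}+q_G(X_2)\Bigr)\sqrt{N}\log N\le (0.02+q_G(X_2))\sqrt{N}\log N,
\]
and then multiplies by $\sum_{d\le H}\mu^2(d)\le 0.65H$ alone. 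This yields $p_G(X_2)=0.65(0.02+q_G(X_2))$ directly, with no residual numerical comparison of $\log^{A+2}N$ against $\sqrt N$. Your route reaches the same constant but only after that extra verification, which (as you correctly note) does hold under the constraints on $A$ and $H$ actually imposed in the applications.
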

\begin{proof}
If $d = 1$, then $|E_{\pi}(N;d,N)| = 0$, so the upper bound from the statement holds. Suppose $d > 1$, then $3 \leq d$ since $(d, N) = 1$ and $N$ is even. By the triangle inequality
\begin{align*}
    |E_{\pi}(N;d,N)| &\leq \left| \pi(N;d,N) - \frac{\li(N)}{\varphi(d)} \right| + \frac{1}{\varphi(d)} |\li(N) - \pi(N)|,
\end{align*}
where the first term can be bounded by Lemma \ref{lem: pi-li under GRH} and the bound for the second one is provided by \cite[Corollary 1]{Schoenfeld1976}
\begin{equation*}
    \frac{1}{\varphi(d)} |\li(N) - \pi(N)| \leq \frac{1}{16\pi} \sqrt{N} \log N \quad \text{for} \quad 2.657 \leq N.
\end{equation*}
Thus,
\begin{align*}
    |E_{\pi}(N;d,N)| \leq (0.02 + q_G(X_2)) \sqrt{N} \log N,
\end{align*}
which leads to the following bound
\begin{align*}
    \sum_{\substack{d\le H\\(d,N)=1}}\mu^2(d)|E_{\pi}(N;d,N)| &\leq 0.65 H (0.02 + q_G(X_2)) \sqrt{N} \log N\\ 
    & \leq 0.65 (0.02 + q_G(X_2)) \frac{N}{\log^A N},
\end{align*}
by Lemma \ref{lem: sum-mu}.
\end{proof}

\begin{lem} \label{lem: r(d)}
Suppose all the conditions from Lemma \ref{lem: bound E_pi} are satisfied. Then
\begin{equation}\label{unsharpeq}
    \sum_{\substack{d\leq H\\ (d,N)=1}}\mu^2(d)|r(d)|\leq\frac{c_{G}(X_2)N}{\log^A N},
\end{equation}
with
\begin{align*}
    c_{G}(X_2)&=p_G(X_2)+\frac{0.9}{\sqrt{X_2} \log \log X_2} \leq 0.429 \quad \text{for} \quad X_2 \geq 4\cdot 10^{18}.
\end{align*}
\end{lem}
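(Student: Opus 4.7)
The plan is to decompose $r(d)$ exactly from the definitions of $|A_d|$ and $|A|$, and then bound the three resulting pieces using results already proven. From $|A_d|=\pi(N;d,N)-\omega(N;d,N)$, $|A|=\pi(N)-\omega(N)$, and $g(d)=1/\varphi(d)$ for squarefree $d$ coprime to $N$, one has
$$r(d)=|A_d|-\frac{|A|}{\varphi(d)}=E_{\pi}(N;d,N)-\omega(N;d,N)+\frac{\omega(N)}{\varphi(d)},$$
so that $|r(d)|\leq|E_{\pi}(N;d,N)|+\omega(N;d,N)+\omega(N)/\varphi(d)$ and the target sum splits into three pieces to be controlled in turn.

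The first piece is exactly the object estimated by Lemma \ref{lem: bound E_pi}, contributing at most $p_G(X_2)N/\log^A N$. For the second piece, I would use the trivial estimate $\omega(N;d,N)\leq\omega(N)$ together with $\sum_{d\leq H}\mu^2(d)\leq 0.608\,H$ from Lemma \ref{lem: sum-mu} and $\omega(N)\leq 1.3841\log N/\log\log N$ from Lemma \ref{lem: omega}, which gives
$$\sum_{\substack{d\leq H\\(d,N)=1}}\mu^2(d)\,\omega(N;d,N)\leq 0.608\cdot\frac{1.3841\log N}{\log\log N}\cdot\frac{\sqrt{N}}{\log^{A+1}N}=\frac{N}{\log^A N}\cdot\frac{0.8415}{\sqrt{N}\log\log N}.$$
Since $\sqrt{x}\log\log x$ is increasing in $x$ and $N\geq X_2$, the right-hand side is at most $\frac{0.8415}{\sqrt{X_2}\log\log X_2}\cdot\frac{N}{\log^A N}$. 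For the third piece, by the second part of Lemma \ref{lem: sum-mu} and Lemma \ref{lem: omega},
$$\omega(N)\sum_{d\leq H}\frac{\mu^2(d)}{\varphi(d)}\leq 1.1\,\omega(N)\log H=O\left(\frac{\log^2 N}{\log\log N}\right),$$
which is vastly smaller than $N/\log^A N$ at $N\geq X_2\geq 4\cdot 10^{18}$ and is easily absorbed into the slack between $0.8415$ and $0.9$.

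Summing the three contributions yields exactly \eqref{unsharpeq} with $c_G(X_2)=p_G(X_2)+0.9/(\sqrt{X_2}\log\log X_2)$; the numerical bound $c_G(X_2)\leq 0.429$ for $X_2\geq 4\cdot 10^{18}$ then follows from $p_G(4\cdot 10^{18})\leq 0.429$ (already observed in Lemma \ref{lem: bound E_pi}) together with monotonicity of both summands in $X_2$. The only real obstacle is bookkeeping: one must verify at the endpoint $X_2=4\cdot 10^{18}$ that the budget $0.9/(\sqrt{X_2}\log\log X_2)$ comfortably accommodates the main term $0.8415/(\sqrt{X_2}\log\log X_2)$ from $\omega(N;d,N)$ plus the tiny tail from $\omega(N)/\varphi(d)$ across the admissible range of $A$, which is an explicit numerical check rather than a new idea.
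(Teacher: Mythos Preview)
Your proof is correct and follows essentially the same route as the paper's. The only difference is a minor simplification the paper makes in the bookkeeping: rather than splitting off $\omega(N;d,N)$ and $\omega(N)/\varphi(d)$ as two separate pieces, the paper observes that both quantities lie in $[0,\omega(N)]$, so
\[
\Bigl|\,\omega(N;d,N)-\tfrac{\omega(N)}{\varphi(d)}\Bigr|\le \omega(N),
\]
giving the single bound $|r(d)|\le |E_{\pi}(N;d,N)|+\omega(N)$. Summing this with the constant $0.65$ (rather than $0.608$) for $\sum_{d\le H}\mu^2(d)$ yields $0.65\times 1.3841\approx 0.9$ directly, with no third tail to absorb. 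Your finer three-term decomposition reaches the same conclusion but trades the cleaner one-line constant for an extra (admittedly tiny) term and a numerical absorption check; the paper's version sidesteps that entirely.
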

\begin{proof}
We note that
$$r(d) = |A_d| - \frac{|A|}{\varphi(d)},$$
hence
$$|r(d)| \leq  |E_{\pi}(N;d,N)| + \omega(N).$$
From Lemmas \ref{lem: omega}, \ref{lem: sum-mu} and \ref{lem: bound E_pi}, we have
\begin{align*}
    \sum_{\substack{d\leq H\\ (d,N)=1}}\mu^2(d)|r(d)| &\leq \frac{p_G(X_2)N}{\log^A N} + 0.65 \frac{\sqrt{N}}{\log^{A+1} N} \cdot \frac{1.3841 \log N}{\log \log N}\\
    & \leq \frac{N}{\log^A N} \left( p_G(X_2) + \frac{0.9}{\sqrt{X_2} \log \log X_2} \right).
\end{align*}
\end{proof}

\subsection{An upper bound for $\sum_{z \le q < y} S(A_q,\PP,z)$} \label{subsec: proof-A_q}
\begin{thm} \label{thm: upper-S-A-q}
Assume that GRH holds. Let $\alpha_2$, $N \geq X_2 \geq \exp(\exp(15))$, $z = N^{1/8}$, and $y = N^{1/3}$ be such that
\begin{equation}\label{conditions-A-q}
    \frac{\sqrt{X_2}}{\log^{A+1} X_2}\geq 10^9,\quad
    \frac{N^{\alpha_2}}{\log^{A+1}N}\ge \exp\left(u\right), \quad
    0 < \alpha_2 < \frac{1}{24},
\end{equation}
with $u \leq 10^{18}$ such that 
\begin{equation} \label{conditions-epsilon-q}
\epsilon := \epsilon(u) < \frac{1}{63},
\end{equation}
where $\epsilon(u)$ is defined by \eqref{def: epsilon}.
Let $$k_x = 8\left( \frac{1}{6} - x\right),$$
then
\begin{align*}
\sum_{\substack{z \le q < y\\ q\nmid N}} S(A_q,\PP,z) \leq \frac{8 U_N N}{\log^2 N} \left( l_1(X_2) + \frac{l_2(X_2)}{\log^{A-3} N} + \frac{l_3(X_2)}{\log^{A-1}N} \right).
\end{align*}
where
\begin{align*}
l_1(X_2) &= (1+3\cdot10^{-7})\left( 1+\frac{0.62\log X_2}{ X_2^{\frac{1}{16}}}\right) \frac{X_2^{1/8}}{X_2^{1/8}-1}\Bigg(\frac{e^{\gamma}}{4}\Bigg(\frac{\log(6)+\log\left(\frac{3-8\alpha_2}{3-18\alpha_2}\right)}{\left(\frac{1}{2}-\alpha_2\right)}\\
&+\frac{512}{k_{\alpha_2}\log^2 X_2}\Bigg)+\left(\log \frac{8}{3}+\frac{64}{\log^2 X_2}\right)\epsilon C_{1}(\epsilon) e^2 h(k_{\alpha_2}) \Bigg), \\
l_2(X_2) &= \frac{c_G(X_2)}{8} \left( 0.55 + \frac{1}{\log X_2} \right)\left(2e^{\gamma}\prod_{p>2}\left(1-\frac{1}{(p-1)^2}\right)\right)^{-1}, \\
l_3(X_2) &= \left( 1+\frac{0.62\log X_2}{ X_2^{\frac{1}{16}}}\right) \left( \frac{2 e^{\gamma}}{k_{\alpha}}
+ \epsilon C_{1}(\epsilon) e^2 h(k_{\alpha}) \right) c_G(X_2),
\end{align*}
with $p_G(X_2)$ defined in Lemma \ref{lem: bound E_pi} and $C_1(\cdot)$ from Table \ref{tab:fF}.
\end{thm}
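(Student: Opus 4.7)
\textit{Plan overview.} I will apply the upper-bound half of the linear sieve (Theorem~\ref{theo:JR}) separately to each set $A_q$ for a prime $q$ in $[z,y)$ with $q\nmid N$, using the $q$-dependent sieve parameter $D_q:=N^{1/2-\alpha_2}/q$, and then sum the resulting bounds over $q$. For each such $q$ I take $g(p)=1/(p-1)$ on $\PP$, exceptional set $\mathbb{Q}:=\{p\in\PP:p<u\}$, and observe that $|(A_q)_d|=|A_{qd}|$ gives $r_q(d)=|A_{qd}|-|A_q|/\varphi(d)$. Hypothesis \eqref{eq:cond1} is satisfied with $\epsilon=\epsilon(u)<1/63$ by Lemma~\ref{lem.2} and \eqref{conditions-epsilon-q}, while $D_q\ge z$ follows from $\log q/\log N<1/3$ together with $\alpha_2<1/24$. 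Writing $s_q:=\log D_q/\log z=4-8\alpha_2-8\log q/\log N\in[k_{\alpha_2},3-8\alpha_2]\subset(1,3)$, one has $F(s_q)=2e^\gamma/s_q$ and $h(s_q)\le h(k_{\alpha_2})$ since $h$ is non-increasing on $[1,\infty)$, so the sieve delivers
\[
S(A_q,\PP,z)\le\bigl(F(s_q)+\epsilon C_1(\epsilon)e^2 h(s_q)\bigr)V(z)|A_q|+R_q,\qquad R_q:=\sum_{\substack{d<QD_q\\ d\mid P(z)}}|r_q(d)|.
\]

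\textit{Main term.} I decompose $|A_q|=\li(N)/\varphi(q)+\delta(q)$, controlling $\delta(q)$ via Lemmas~\ref{eq:pi}, \ref{lem: pi-li under GRH} and \ref{lem: omega}, and use Lemma~\ref{vjlem} to write $V(z)\li(N)\le(1+3\cdot 10^{-7})(1+0.62\log X_2/X_2^{1/16})\cdot 8 U_N N/\log^2 N$. The leading piece becomes $2e^\gamma V(z)\li(N)\sum_{z\le q<y,\,q\nmid N}1/(s_q(q-1))$, and I bound $1/(q-1)\le (z/(z-1))/q$ (producing the $X_2^{1/8}/(X_2^{1/8}-1)$ factor) and apply Abel summation against Mertens' theorem (Lemma~\ref{mertenlem-uncond}) to convert the inner sum, up to an $O(1/\log^2 X_2)$ correction, into the integral
\[
\int_{1/8}^{1/3}\frac{du}{u(4-8\alpha_2-8u)}=\frac{1}{4-8\alpha_2}\log\!\left(\frac{6(3-8\alpha_2)}{3-18\alpha_2}\right),
\]
evaluated by partial fractions. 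This reproduces the $(e^\gamma/4)$-term of $l_1(X_2)$. An entirely analogous manipulation, using $h(s_q)\le h(k_{\alpha_2})$ and $\sum_{z\le q<y}1/q\le\log(8/3)+O(1/\log^2 X_2)$, produces the $\epsilon C_1(\epsilon)e^2 h(k_{\alpha_2})(\log(8/3)+64/\log^2 X_2)$-term.

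\textit{Error terms and main obstacle.} The summand $l_3/\log^{A-1}N$ captures the cost of replacing $|A_q|$ by $\li(N)/\varphi(q)$, namely $V(z)(F(k_{\alpha_2})+\epsilon C_1(\epsilon)e^2 h(k_{\alpha_2}))\sum_q|\delta(q)|$; the inner sum is bounded by $c_G(X_2)N/\log^A N$ via Lemma~\ref{lem: bound E_pi} (the $\omega$-contribution being absorbed into $c_G$). The summand $l_2/\log^{A-3}N$ controls $\sum_q R_q$: using the identity $r_q(d)=r(qd)-r(q)/\varphi(d)$, the first piece satisfies $\sum_q\sum_d|r(qd)|\le\sum_{n<QN^{1/2-\alpha_2},\,(n,N)=1}\mu^2(n)|r(n)|\le c_G(X_2)N/\log^A N$ by Lemma~\ref{lem: r(d)} (condition \eqref{conditions-A-q} ensures $QN^{1/2-\alpha_2}\le\sqrt{N}/\log^{A+1}N$), while the second piece uses Lemma~\ref{lem: sum-mu} to bound $\sum_{d<QD_q,\,d\mid P(z)}1/\varphi(d)\le(0.55+1/\log X_2)\log N$ and Lemma~\ref{lem: bound E_pi} to bound $\sum_q|r(q)|$; normalising by $8U_N N/\log^2 N$ via $U_N\ge 2e^\gamma\prod_{p>2}(1-1/(p-1)^2)$ produces the stated form of $l_2$. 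The main obstacle is the bookkeeping in the main-term step: the $s_q$-dependent weight forces a weighted Mertens-via-Abel-summation estimate, and carrying all the explicit constants — the Mertens corrections of order $1/\log^2 X_2$, the $z/(z-1)$ factor, and the PNT corrections from Lemma~\ref{vjlem} — through the partial-fraction computation and into $l_1$ is the longest routine step of the argument.
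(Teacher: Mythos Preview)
Your proposal is correct and follows essentially the same route as the paper: apply the upper-bound linear sieve to each $A_q$ with $D_q=N^{1/2-\alpha_2}/q$, split $|A_q|$ into a main part plus an error, evaluate the weighted Mertens sum $\sum_q 1/(q\,s_q)$ as the partial-fraction integral, and control $\sum_q R_q$ via the identity $r_q(d)=r(qd)-r(q)/\varphi(d)$ together with Lemmas~\ref{lem: sum-mu} and~\ref{lem: r(d)}. The only cosmetic differences are that the paper writes $|A_q|=|A|/(q-1)+r(q)$ (then bounds $|A|$ by Lemma~\ref{lem: bound-A}) rather than your $|A_q|=\li(N)/\varphi(q)+\delta(q)$, and it cites \cite[Lemma~1(ii)]{Greaves} for the weighted prime sum where you invoke Abel summation directly; also note that in the assembly of $l_2$ the term $1/\log X_2$ comes from the piece $\sum_q\sum_d|r(qd)|\le c_G N/\log^A N$ (one extra $\log N$ relative to the other piece), not from the bound on $\sum_d 1/\varphi(d)$.
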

\begin{proof}
Let $q$ be a prime with $z \leq q < y$. If $q | N$, then for every $N-p \in A_q$, $q$ divides both $N$ and $p$. The latter cannot be achieved since $(p,N)=1$. Thus, $S(A_q,\PP,z)=0$ for $(q,N)\ne 1$. Now we assume $(q, N)=1$.

We will proceed as in the proof of \cite[Theorem 47]{BJS22}. Namely, we apply the upper bound linear sieve to get
\begin{equation} \label{eq: up-lin-sieve-Aq}
S(A_q,\mathbb{P},z)<(F(s_q)+\epsilon C_{1}(\epsilon) e^2h(s_q))V(z)|A_q|+R_q,
\end{equation}
with
\begin{equation*}
    R_q = \sum_{\substack{d | P(z) \\ d < QD_q}} |r_q(d)|, \quad r_q(d) = |A_{qd}| - \frac{|A_q|}{\varphi(d)},
\end{equation*}
\begin{equation*}
    Q = Q(u), \quad D_q = \frac{D}{q} = \frac{N^{\frac{1}{2}-\alpha_2}}{q} \geq z, \quad \text{and} \quad s_q = \frac{\log D_q}{\log z}.
\end{equation*}

Hence, we are left with bounding
\begin{align}
\sum_{\substack{z \le q < y\\ q\nmid N}} S(A_q,\PP,z) &\leq
8U_N\left( 1+\frac{0.62\log N}{ N^{\frac{1}{16}}}\right)\sum_{\substack{z \le q < y\\q\nmid N}}\left|A_q\right|\left(\frac{F(s_q)+\epsilon C_{1}(\epsilon) e^2 h(s_q)}{\log N}\right) \label{eq: S-A-q-main} \\
&+\sum_{\substack{z \le q < y\\ q\nmid N}} R_q. \label{eq: S-A-q-error}
\end{align}
We will bound the error term \eqref{eq: S-A-q-error}. Since $d | P(z)$, $d$ is coprime to $q \geq z$, and thus $\varphi(qd) = \varphi(q) \varphi(d)$, so we have
\begin{align*}
    r_q(d) &= |A_{qd}| - \frac{|A_q|}{\varphi(d)}\\
    &=  |A_{qd}| - \frac{|A|}{\varphi(qd)} + \frac{|A|}{\varphi(qd)} - \frac{|A_q|}{\varphi(d)}\\
    &= r(qd) + \frac{r(q)}{\varphi(d)},
\end{align*}
and hence
\begin{align*}
\sum_{\substack{z \leq q < y \\ (q,N)=1}} R_q 
&\leq
\sum_{\substack{z \leq q < y \\ (q,N)=1}} \sum_{\substack{d | P(z) \\ d < QD_q}} \left| r(qd) \right| + \sum_{\substack{z \leq q < y \\ (q,N)=1}} \left| r(q) \right| \sum_{\substack{d | P(z) \\ d < QD_q}} \frac{1}{\varphi(d)} \\
&\leq \sum_{\substack{d' < QD \\ (d',N)=1}} \left| r(d') \right| + \sum_{\substack{z \leq q < y \\ (q,N)=1}} \left| r(q) \right| \sum_{d < QD} \frac{\mu^2(d)}{\varphi(d)}.
\end{align*}
To obtain the last line we note that from $qd = q' d'$ with $z \leq q, q' < y$ and $d, d' | P(z)$ we get $q = q'$ and $d = d'$.
By \eqref{conditions-A-q} and Lemma \ref{lem: theta-bound}, $QD \leq \frac{\sqrt{N}}{\log^{A+1} N}$, so
\begin{align*}
    \sum_{\substack{d' < QD \\ (d',N)=1}} \left| r(d') \right| \leq \frac{c_G(X_2) N}{\log^A N},
\end{align*}
with $c_G$ defined in Lemma \ref{lem: r(d)}. By \eqref{conditions-A-q}, $y \leq \frac{\sqrt{N}}{\log^{A+1} N}$, so we can use Lemmas \ref{lem: sum-mu} and \ref{lem: r(d)} to get
\begin{align*}
\sum_{\substack{z \leq q < y \\ (q,N)=1}} \left| r(q) \right| \sum_{d < QD} \frac{\mu^2(d)}{\varphi(d)} \leq \frac{0.55 ~c_G(X_2) N}{\log^{A-1} N}.
\end{align*}
Let us bound the main term \eqref{eq: S-A-q-main}. Since $0 < \alpha_2 < \frac{1}{24}$,
\begin{align*}
    s_q = \frac{\log D_q}{\log z} = 8 \left( \frac{1}{2} - \alpha_2 - \frac{\log q}{\log N} \right) \in [1,3],
\end{align*}
so $F(s_q) = \frac{2 e^{\gamma}}{s_q}$ by \cite[Theorem 9.8]{Nathanson1996}, hence by using $A_q = \frac{|A|}{q-1} + r(q)$ we get
\begin{align}
\sum_{\substack{z \le q < y\\q\nmid N}}\left|A_q\right|\left(\frac{F(s_q)+\epsilon C_{1}(\epsilon) e^2 h(s_q)}{\log N}\right) &\leq \sum_{\substack{z \le q < y\\q\nmid N}} \frac{|A|}{q - 1} \left( \frac{e^{\gamma}}{4 \log D/q} + \frac{\epsilon C_{1}(\epsilon) e^2 h(s_q)}{\log N} \right) \label{eq: S-A-q-error-main} \\
&+ \left( \frac{e^{\gamma}}{4 \log D/y}
+ \frac{\epsilon C_{1}(\epsilon) e^2 h(k_{\alpha})}{\log N} \right) \frac{c_G(X_2) N}{\log^A N}. \nonumber
\end{align}
We repeat the argument from the proof of \cite[Theorem 47]{BJS22} to bound the first line in \eqref{eq: S-A-q-error-main} above. By \cite[Lemma 1 (ii)]{Greaves} we get
\begin{align*}
\sum_{\substack{z \le q < y\\q\nmid N}}\frac{1}{q\log D/q}&\le \sum_{z \le q < y}\frac{1}{q\log D/q}\le 
 \int_z^y\frac{1}{t\log t \log D/t}\mathrm{d}t+\frac{64}{\log^2N}\frac{1}{\log D/y}\\ & = \frac{\log(6)+\log\left(\frac{3-8\alpha_2}{3-18\alpha_2}\right)}{\left(\frac{1}{2}-\alpha_2\right) \log N}+\frac{64}{\left( \frac{1}{2}-\frac{1}{3}-\alpha_2\right)\log^3N},
\end{align*}
where we have used a substitution $t\to N^{\tau}$ to evaluate the integral.
Using \cite[Theorem 5]{RosserSchoenfeld1962}, we have that
\begin{align*}
    \sum_{z \leq p \leq y} \frac{1}{p} < \log \log y - \log \log z + \frac{1}{\log^2 z},
\end{align*}
since $y \geq 286$ by the initial conditions on $N$. We note that $z = N^{1/8}$ would be a prime number for an even integer $N$ only if $N = 2^8 = 256$, which would contradict \eqref{conditions-A-q}. Thus, $z$ is not a prime, and
\begin{align*}
    \sum_{z < p \leq y} \frac{1}{p} < \log \log y - \log \log z + \frac{1}{\log^2 z}.
\end{align*}
Hence, we complete the proof by noting that \eqref{eq: S-A-q-error-main} is at most
\begin{align*}
\begin{split}
&\left|A\right|\frac{N^{1/8}}{N^{1/8}-1}\left(\frac{e^{\gamma}}{4}\left(\frac{\log(6)+\log\left(\frac{3-8\alpha_2}{3-18\alpha_2}\right)}{\left(\frac{1}{2}-\alpha_2\right) \log N}+\frac{512}{k_{\alpha_2}\log^3N}\right)+\left(\log \frac{8}{3}+\frac{64}{\log^2N}\right)\frac{\epsilon C_{1}(\epsilon) e^2 h(k_{\alpha_2})}{\log N} \right),
\end{split}
\end{align*}
where $|A|$ is bounded in Lemma \ref{lem: bound-A}.
\end{proof}

\subsection{An upper bound for $S(B,\PP,y)$}
\label{subsection:B}
We will now prove an upper bound for  $S(B,\PP,y)$. This is obtained using Theorem~\ref{theo:JR} together with a bound on $B$ and introducing a bilinear form that allows to bound the error term using Lemma~\ref{lem:bil}.
\begin{thm}
\label{theo:B}
Set $0<\epsilon_1<1$. Let $N$ be a positive integer, and $\alpha_3$ be a real number satisfying
\begin{equation} \label{conditions-B}
\frac{\sqrt{N}}{\log^{A+1} N}\geq 10^9, \quad \frac{N^{\alpha_3}}{ \log^{A+1} N }\ge \exp\left(u\right),\quad \quad 0 \le \alpha_3 < 1/16,
\end{equation}
with $u \leq 10^{18}$ such that 
\begin{equation} \label{conditions-epsilon-B}
\epsilon(u) < \frac{1}{63},
\end{equation}
where $\epsilon(u)$ is defined by \eqref{def: epsilon}.
Then we have 
\begin{align*}
S(B,\PP,y)&\le  \frac{U_N N}{\log^2 N}
 \Big(\left( 1+\frac{0.06\log N}{ N^{\frac{1}{6}}}\right)\\ & \cdot\left(\frac{2}{\frac{1}{2}-\alpha_3} e^{\gamma}+3\epsilon C_{1}(\epsilon)\right) 
\left(1+\epsilon_1\right)\left(\frac{\log y \left(\li (y)-\sqrt{y}\log y /(8\pi)\right)}{y}\right)\\ & \cdot\left(\overline{c} +\frac{64 \log \frac{26}{21}}{\log^2 N} +\left(\log \left(\frac{8}{3} \right)+\frac{64}{\log^2 N} \right)\left( \frac{3\log (1+\epsilon_1))}{\log N}+ \frac{27}{\log^2 N}\right)\right) 
 \\ &+\left(2 e^{\gamma}\prod_{p>2}\left( 1-\frac{1}{(p-1)^2}\right) \right)^{-1} \frac{N^{-1/48} (1 + \epsilon_1) \log^5 N}{\log(1+\epsilon_1)} \\
 & \cdot \Bigg[0.046 \cdot m\left(N^{2/3},(1+\epsilon_1)N^{1/8},\frac{\sqrt{N}}{\log^{A+1}N}\right) (1+\epsilon_1)^{-1/6} + 0.159 N^{-\frac{5}{48}} \Bigg] \Big),
\end{align*}
with $\overline{c}$ defined in Lemma~\ref{lemma:Bover}, $m(\cdot)$ defined in Lemma \ref{lem:bil}, and $C_1(\cdot)$ from Table \ref{tab:fF}.
\end{thm}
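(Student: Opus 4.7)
\medskip

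The plan is to apply the upper bound half of Theorem~\ref{theo:JR} to the set $B$ with sieve parameter $y$, keeping the same auxiliary choices as for $S(A,\PP,z)$ and $S(A_q,\PP,z)$: $g(p)=1/(p-1)$, $\QQ=\QQ(u)$, $D=N^{1/2-\alpha_3}$, and $\epsilon=\epsilon(u)<1/63$ valid by \eqref{conditions-epsilon-B} and Lemma~\ref{lem.2}. Because $0\le\alpha_3<1/16$, the parameter $s=\log D/\log y=3(1/2-\alpha_3)$ lies in $[1,2]$, so $F(s)=2e^{\gamma}/s$ and $h(s)=e^{-2}$; inserting Lemma~\ref{vjlem} for $V(y)$ in the form $V(y)\le (3U_N/\log N)(1+0.06\log N/N^{1/6})$ converts $(F(s)+\epsilon C_1(\epsilon)e^{2}h(s))V(y)$ into the displayed factor $(1+0.06\log N/N^{1/6})(2e^{\gamma}/(1/2-\alpha_3)+3\epsilon C_1(\epsilon))U_N/\log N$. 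This produces the desired shape for the main term once multiplied by an estimate for $|B|$.

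The second step is to bound $|B|$. Writing $|B|=\sum_{z\le p_1<y,\,p_1\nmid N}M(p_1)$ where $M(p_1)$ counts admissible $(p_2,p_3)$ with $y\le p_2\le p_3$ and $p_2p_3<N/p_1$, I would split $[z,y)$ into $O(\log N/\log(1+\epsilon_1))$ intervals of the form $[N^{\beta},(1+\epsilon_1)N^{\beta})$; on each such interval $p_1$ is essentially constant, so $M(p_1)$ reduces (via Lemma~\ref{lemma:Bover}) to the constant $\overline{c}$ times a $\pi$--type quantity. Summing over $p_1$ by a Greaves-type integral estimate, as in the proof of Theorem~\ref{thm: upper-S-A-q}, gives the $\log y(\li(y)-\sqrt{y}\log y/(8\pi))/y$ envelope together with the $(1+\epsilon_1)$ and $\log(26/21)$ correction terms; the factor $(\log(8/3)+64/\log^2 N)$ absorbs the tail of the sum against the sieve weight. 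The bound for $|A|$ from Lemma~\ref{lem: bound-A} controls a $3\cdot 10^{-7}$-type loss in the prefactor.

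The principal obstacle is the remainder $R=\sum_{d\mid P(y),\,d<QD}|r(d)|$ with $r(d)=|B_d|-|B|/\varphi(d)$. Since $d\mid N-p_1p_2p_3$ is equivalent to $p_1p_2p_3\equiv N\pmod d$, this is a classical setting for the bilinear form of Lemma~\ref{lem:bil}: set $p=p_1$ and let $a(n)=\mathbf{1}\{n=p_2p_3,\;y\le p_2\le p_3,\;np<N,\;(n,N)=1\}$, so $|a(n)|\le 1$ and $n<N^{2/3}$. I would again dyadically partition $[z,y)$ into ranges $[N^{\beta},(1+\epsilon_1)N^{\beta})$, and on each range invoke Lemma~\ref{lem:bil} with $X=N^{2/3}$, $Y=(1+\epsilon_1)N^{\beta}$, and $D^{*}=\sqrt{N}/\log^{A+1}N\ge QD$ (guaranteed by \eqref{conditions-B} and Lemma~\ref{lem: theta-bound}). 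This produces an error of order $m(X,Y,D^*)XY\log^{5/3}D^*\log^{1/3}Y/Y^{1/6}$, and the sum over the $O(\log N/\log(1+\epsilon_1))$ ranges is dominated by the smallest $Y$, i.e. $Y=(1+\epsilon_1)N^{1/8}$, producing the $(1+\epsilon_1)^{-1/6}$ and $N^{-1/48}=N^{-(1/8)/6}$ factors appearing in the statement; the second term $0.159\,N^{-5/48}$ arises from the trivial part of \eqref{eq:bilinear form} after factoring out the same envelope. Normalizing by $(2e^{\gamma}\prod_{p>2}(1-1/(p-1)^2))^{-1}$ expresses the error contribution in units of $U_NN/\log^2 N$.

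The hardest part will be auditing the dyadic decomposition: checking that the hypothesis $(\sqrt{Y}\log D^{*}/(q_G(Y)\log Y\log 2))^{2/3}\le Z$ of Lemma~\ref{lem:bil} holds uniformly on every dyadic range (which forces the choice $Z=N^{1/8}$ and is the source of the constraint $\alpha_3<1/16$), that $m(\cdot)$ is monotone enough to be replaced by its value at the extreme range, and that no boundary prime $p_1$ is double-counted when the intervals $[N^{\beta},(1+\epsilon_1)N^{\beta})$ tile $[z,y)$. Assembling these pieces together with the main term gives the stated inequality.
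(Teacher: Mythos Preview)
Your overall architecture is right, but there is a genuine structural gap in the treatment of the remainder term. You apply Theorem~\ref{theo:JR} directly to $B$ and then try to feed $r(d)=|B_d|-|B|/\varphi(d)$ into Lemma~\ref{lem:bil}. But the weight you write down, $a(n)=\mathbf{1}\{n=p_2p_3,\;y\le p_2\le p_3,\;np<N,\;(n,N)=1\}$, depends on $p$ through the constraint $np<N$, so it is not an arithmetic function of $n$ alone and Lemma~\ref{lem:bil} does not apply. Dyadically partitioning $p_1\in[z,y)$ afterwards does not fix this: on each range $[w_j,(1+\epsilon_1)w_j)$ the condition $np<N$ still varies with $p$, and relaxing it to $nw_j<N$ changes $|B_d|$ and $|B|/\varphi(d)$ by different amounts, producing a new error you have not controlled. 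The paper resolves this by reversing the order of operations: it first enlarges $B$ to $\overline{B}=\bigcup_jB^{(j)}$, where on each $B^{(j)}$ the condition is already relaxed to $w_jp_2p_3<N$ (so $p_1$ and $n=p_2p_3$ range independently), then applies the linear sieve to each $B^{(j)}$ separately, and only then invokes Lemma~\ref{lem:bil} with $X=N/w_j$, $Y=\min(y,(1+\epsilon_1)w_j)$, $Z=w_j$ and $a(n)=\mathbf{1}\{n=p_2p_3,\;y\le p_2<p_3,\;(n,N)=1\}$, which is genuinely a function of $n$.

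Two smaller points. First, the term $0.159\,N^{-5/48}$ does not come from a ``trivial part'' of the bilinear form; it arises when one inserts the coprimality condition $(p_1p_2p_3,d)=1$ into the second sum of $r_d^{(j)}$ to match the shape of Lemma~\ref{lem:bil}, at the cost of at most $(1+\epsilon_1)N\omega(d)/(z\varphi(d))$ per $d$, which after summing over $d<D^*$ and over $j$ gives the $N^{7/8}$-size contribution. Second, Lemma~\ref{lem: bound-A} and $|A|$ play no role in this argument; the main term is handled entirely by the explicit bound for $|\overline{B}|$ in Lemma~\ref{lemma:Bover}.
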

We start by recalling that
\begin{equation*}
B=\left\{ N-p_1p_2p_3:z \le p_1<y \le p_2 \le p_3,~ p_1p_2p_3 <N, (p_1p_2p_3,N)=1   \right\},
\end{equation*}
where $p_1, p_2, p_3$ denote prime numbers.
We now need to drop the restriction $(p_1,N)=1$ and relax the condition $p_1p_2p_3<N$, so that $p_1$ and $p_2p_3$ will range over independent intervals introducing a bilinear form that we will bound using Lemma~\ref{lem:bil}.

Let us fix $0 < \epsilon_1 < 1$ and let $j$ be a non-negative integer, such that
\begin{equation*}
    w_j := z(1 + \epsilon_1)^j < y,
\end{equation*}
which implies
\begin{equation*}
    j < \frac{\log(y/z)}{\log(1 + \epsilon_1)}.
\end{equation*}
We introduce sets
\begin{align*}
B^{(j)}=\{ N-p_1p_2p_3: z\le p_1<y\le p_2\le p_3, w_j p_2p_3<N,(p_2p_3,N)=1,~ w_j\le p_1<w_j(1+\epsilon_1)\},
\end{align*}
for
\begin{equation} \label{eq: bound_j_0}
0 \le j \le j_0=\left\lceil \frac{\log y/z}{\log(1+\epsilon_1)} \right\rceil - 1.
\end{equation}
Let $\overline{B}=\bigcup_j B^{(j)}$, then $B \subseteq \overline{B}$, so we have
\begin{equation}
\label{eq:numb}
S(B,\PP,y)\le S(\overline{B},\PP,y)= \sum_{j\le j_0}S(B^{(j)},\PP,y),
\end{equation}
since $B^{(j)}$ are disjoint. We will apply Theorem \ref{theo:JR} to sets $B^{(j)}$. Let us take $0 < \alpha_3 < 1/16$, $N$ and $u$ satisfying \eqref{conditions-B} and \eqref{conditions-epsilon-B}, and denote $D = N^{\frac{1}{2} - \alpha_3}$, $Q = Q(u)$, then we have
\begin{equation*}
    D \ge y, \quad
    QD \leq \frac{\sqrt{N}}{\log^{A+1} N},
\end{equation*}
since $\alpha_3 < 1/6$ and by Lemma \ref{lem: theta-bound} respectively. We set
\begin{equation}
\label{eq:D*1}
s =\frac{\log D}{\log y} = \frac{3}{2} - 3 \alpha_3.
\end{equation}

Then Theorem~\ref{theo:JR} gives
\begin{equation*}
S(B^{(j)}, \PP,y) <\left|B^{(j)}\right|V(y)(F(s)+\epsilon C_{1}(\epsilon) e^2h(s))+R^{(j)},
\end{equation*}
with
\begin{equation*}
R^{(j)}=\sum_{\substack{d<D Q\\d|P(y)}} \left|r_d^{(j)}\right|, \quad \quad r_d^{(j)}=\left|B_d^{(j)}\right|-\frac{B^{(j)}}{\varphi(d)}, \quad \text{and} \quad B_d^{(j)}=\sum_{\substack{p_1p_2p_3\equiv N \pmod d \\ z\le p_1<y\le p_2\le p_3,~ w_j\le p_1<w_j(1+\epsilon_1) \\w_j p_2p_3<N,~ (p_2p_3,N)=1 }} 1.
\end{equation*}

By Lemma~\ref{vjlem} we have $V(y)< 3\frac{U_N}{\log N}\left( 1+\frac{0.06\log N}{ N^{\frac{1}{6}}}\right)$. By the definition of $s$, $1 \le s \le 2$, and therefore $F(s)=\frac{2e^{\gamma}}{s}$ by \cite[p.~259]{Nathanson1996}. Thus, for every $j$ 
\begin{align*}
S(B^{(j)},\PP,y) < \left|B^{(j)}\right|\frac{U_N}{\log N}\left( 1+\frac{0.06\log N}{ N^{\frac{1}{6}}}\right)\left(\frac{2}{\frac{1}{2}-\alpha_3} e^{\gamma}+3\epsilon C_{1}(\epsilon) e^2h\left(\frac{3}{2}-3\alpha_3\right)\right)+R^{(j)}.
\end{align*}
Since $1 \leq \frac{3}{2} - 3\alpha_3 \leq 2$, we note that $h\left(\frac{3}{2} - 3\alpha_3\right) = e^{-2}$. Therefore, by \eqref{eq:numb},
\begin{align}
\label{eq:SBP}
\begin{split}
S(\overline{B}, \PP,y)<\left|\overline{B}\right|\frac{U_N}{\log N}\left( 1+\frac{0.06\log N}{ N^{\frac{1}{6}}}\right)\left(\frac{2}{\frac{1}{2}-\alpha_3} e^{\gamma}+3\epsilon C_{1}(\epsilon)\right)+R,
\end{split}
\end{align}
where $R=\sum_{j\le j_0} R^{(j)}$. We bound the main term first. \par
We can see that 
\begin{equation*}
\left|B^{(j)}\right|\leq(\pi(w_j(1 + \epsilon_1))-\pi(w_j) + 1)\cdot \left| \{(p_2,p_3):y\le p_2\le p_3, ~w_j p_2p_3<N,(p_2p_3,N)=1\} \right|,
\end{equation*}
\begin{equation}
\label{eq:Bin}
B \subseteq \overline{B}\subseteq \left\{ N-p_1p_2p_3:z \le p_1<y \le p_2 \le p_3,~ p_1p_2p_3 <(1+\epsilon_1)N  \right\}.
\end{equation}
We will now need an explicit upper bound for the cardinality of $\overline{B}$ that is obtained in a similar way as done by Nathanson in \cite[pp. 289--291]{Nathanson1996}. 
\begin{lem}
\label{lemma:Bover}
We have, assuming GRH and that $y\ge 2657$,
\begin{align*} 
\left| \overline{B}\right| \le & \left(1+\epsilon_1\right)\left(\frac{\log y \left(\li (y)-\sqrt{y}\log y /(8\pi)\right)}{y}\right)N
\\ &\cdot \left(\frac{\overline{c}}{\log N} +\frac{64 \log \frac{26}{21}}{\log^3 N} +\left(\log \left(\frac{8}{3} \right)+\frac{64}{\log^2 N} \right)\left( \frac{3\log (1+\epsilon_1))}{\log^2 N}+ \frac{27}{\log^3 N}\right)\right),
\end{align*}
with $$\overline{c}=\int_{1/8}^{1/3} \frac{\log(2-3\beta)}{\beta (1-\beta)}d \beta <0.363084.$$
\end{lem}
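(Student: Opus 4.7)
The plan is to follow Nathanson's treatment of the analogous upper bound in \cite[pp.~289--291]{Nathanson1996}, substituting the unconditional prime-counting estimates with the sharper RH-conditional Lemma~\ref{eq:pi} at every step. My starting point is the inclusion \eqref{eq:Bin}, which reduces the problem to estimating
\[
    |\overline{B}| \le \sum_{z \le p_1 < y}\ \sum_{\substack{y \le p_2 \le p_3 \\ p_1 p_2 p_3 < (1+\epsilon_1) N}} 1.
\]
First I would bound the innermost sum over $p_3$ by $\pi\!\left(\frac{(1+\epsilon_1)N}{p_1 p_2}\right)$ and apply Lemma~\ref{eq:pi} to replace this by $\li\!\left(\frac{(1+\epsilon_1)N}{p_1 p_2}\right)$ plus a secondary term of order $\sqrt{N/(p_1 p_2)}\log(N/(p_1 p_2))$. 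The constraint $p_2 \le p_3$ caps the $p_2$-range at $\sqrt{(1+\epsilon_1)N/p_1}$.

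Next I would carry out partial summation in $p_2$, converting the sum against $d\pi(t)$ into an integral in $t$; the RH form of Lemma~\ref{eq:pi} applied at the endpoint $t = y$ produces the distinguished factor $\frac{\log y\,(\li(y) - \sqrt{y}\log y/(8\pi))}{y}$ that multiplies the final expression. The outer sum in $p_1$ is then treated by the same Greaves-style device used in the proof of \autoref{thm: upper-S-A-q}, namely \cite[Lemma~1(ii)]{Greaves}, which replaces $\sum_{z \le p_1 < y}$ by $\int_z^y dt/\log t$ with a uniform error evaluated at $p_1 = y$; this endpoint error is the source of the $\frac{64 \log(26/21)}{\log^2 N}$ and $\frac{27}{\log^2 N}$ contributions multiplied against $\log(8/3) + 64/\log^2 N$ in the final bound.

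With both sums converted to integrals, the substitution $p_1 = N^{\alpha}$, $p_2 = N^{\beta}$ rewrites the main term as
\[
    \frac{(1+\epsilon_1) N}{\log N} \int_{1/8}^{1/3}\int_{1/3}^{(1-\alpha)/2} \frac{d\beta\, d\alpha}{\alpha\,\beta\,(1-\alpha-\beta)}.
\]
The inner integral is evaluated by partial fractions, $\frac{1}{\beta(1-\alpha-\beta)} = \frac{1}{1-\alpha}\bigl(\frac{1}{\beta} + \frac{1}{1-\alpha-\beta}\bigr)$, which yields $\frac{\log(2-3\alpha)}{1-\alpha}$; the outer integral then reproduces $\overline{c}$ exactly, and numerical evaluation gives $\overline{c} < 0.363084$. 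The factor $(1+\epsilon_1)$ accounts for the widening from $p_1 p_2 p_3 < N$ to $p_1 p_2 p_3 < (1+\epsilon_1)N$ created by the dyadic partition through $w_j = z(1+\epsilon_1)^j$.

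The hard part will not be conceptual but bookkeeping: I must carry all secondary Schoenfeld contributions from the three successive prime-counting replacements and confirm that they aggregate into the stated $O(1/\log^2 N)$ and $O(1/\log^3 N)$ remainders, without any one of them exceeding the tabulated constants $64 \log(26/21)$, $27$, and $\log(8/3)$. Everything else reduces to standard integration and the Greaves summation device already invoked in the proof of \autoref{thm: upper-S-A-q}.
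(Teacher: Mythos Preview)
Your overall strategy matches the paper, but you have misidentified where the RH input enters and how the secondary constants arise; your proposed partial-summation step in $p_2$ against $d\pi(t)$ would not produce the stated factor.

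In the paper, Lemma~\ref{eq:pi} is used \emph{only once}, at the $p_3$-count, and not in the form $\pi\approx\li+\text{error}$ to be carried forward. The point is that both $x\mapsto\frac{\li(x)\log x}{x}$ and $x\mapsto\frac{\log^2 x}{8\pi\sqrt{x}}$ are decreasing for $x\ge y$, so the RH bound yields a \emph{uniform} estimate $\pi(x)\le C\,x/\log x$ for all $x\ge y$, with $C$ equal to the constant in the statement (obtained by evaluating those two ratios at $x=y$). Applying this with $x=(1+\epsilon_1)N/(p_1p_2)\ge y$ produces the ``distinguished factor'' immediately, at the $p_3$ step---not from any endpoint of a later partial summation in $p_2$. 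After this single application one is reduced to the purely unconditional double sum
\[
\sum_{z\le p_1<y}\frac{1}{p_1}\sum_{y\le p_2<\sqrt{(1+\epsilon_1)N/p_1}}\frac{1}{p_2\,\log(N/p_1p_2)},
\]
and no further appeal to RH or to $d\pi$ is made. The inner sum is handled by \cite[Lemma~1(ii)]{Greaves}, giving the integral $H(p_1)=\int_y^{\sqrt{N/p_1}}(\log(N/p_1 t))^{-1}\,d\log\log t$ plus a boundary term $h_{p_1}(w)/\log^2 y$ (the source of $27/\log^3 N$) and an overshoot integral over $[\sqrt{N/p_1},\sqrt{(1+\epsilon_1)N/p_1}]$ (the source of the $\log(1+\epsilon_1)$ term). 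Then $\sum_{p_1}H(p_1)/p_1$ is treated by a second Greaves replacement, yielding $\int_z^y H(u)\,d\log\log u=\overline{c}/\log N$ plus the boundary term $H(z)/\log^2 z$. The prefactor $\log(8/3)+64/\log^2 N$ is \emph{not} a Greaves endpoint error: it is the Rosser--Schoenfeld \cite[Theorem~5]{RosserSchoenfeld1962} bound on $\sum_{z\le p_1<y}1/p_1$, which multiplies only the $p_1$-independent residual from the inner sum. In particular the $64\log(26/21)/\log^3 N$ term stands alone and is not multiplied by $\log(8/3)+64/\log^2 N$, contrary to what you wrote.

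So the skeleton to aim for is: one RH bound on $\pi$, two Greaves replacements (inner then outer), one Mertens-type bound on $\sum 1/p_1$---not three successive RH applications with partial summation.
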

\begin{proof}
With $(1+\epsilon_1)N/(p_1 p_2)>p_3\ge y\ge 2657$, by \cite[Corollary 1]{Schoenfeld1976} we obtain
\begin{equation*}
\pi \left (\frac{(1+\epsilon_1)N}{p_1 p_2} \right)\le \left(1+\epsilon_1\right)\left(\frac{\log y \left(\li (y)-\sqrt{y}\log y /(8\pi)\right)}{y}\right)\frac{N}{p_1 p_2 \log (N/p_1p_2)}.
\end{equation*}
Now since $p_1 < p_2 \le p_3$ and $p_1p_2p_3<(1+\epsilon_1)N$, we have $p_1p_2^2<(1+\epsilon_1)N$ and 
\begin{equation*}
p_3 < \frac{(1+\epsilon_1)N}{p_1p_2}.
\end{equation*}
Thus, from the definition \eqref{eq:Bin} of $\overline{B}$,
\begin{align*}
\left| \overline{B}\right| &\le\sum_{\substack{z \le p_1<y \le p_2 \le p_3 \\ p_1p_2p_3 <(1+\epsilon_1)N}} 1 \le \sum_{\substack{z \le p_1<y \le p_2  \\ p_1p_2^2 <(1+\epsilon_1)N}} \pi \left (\frac{(1+\epsilon_1)N}{p_1 p_2} \right) \\ & \le \left(1+\epsilon_1\right)\left(\frac{\log y \left(\li (y)-\sqrt{y}\log y /(8\pi)\right)}{y}\right)N\sum_{z \le p_1 <y}\frac{1}{p_1}\sum_{y\le p_2<w}\frac{1}{ p_2 \log (N/p_1p_2)},
\end{align*}
with $w=\sqrt{\frac{(1+\epsilon_1)N}{p_1}}$. \par
We now introduce the functions $h_p(t)=\left(\log N/pt\right)^{-1}$ and
\begin{equation*}
H(u)=\int_y^{\sqrt{N/u}} h_u(t)d\log\log t.
\end{equation*}
Using \cite[Lemma 1 (ii)]{Greaves} and \cite[Theorem 5]{RosserSchoenfeld1975},
\begin{align*}
\sum_{y\le p_2<w}\frac{1}{ p_2 \log (N/p_1p_2)}&\le \int_y^wh_{p_1}(t) d \log \log t+\frac{h_{p_1}(w)}{\log^2 y}\\ &= H(p_1)+\int_{\sqrt{\frac{N}{p_1}}}^wh_{p_1}(t) d \log \log t+ \frac{h_{p_1}(w)}{\log^2 y}\\ &
\le H(p_1)+\frac{10\log (1+\epsilon_1)}{\log^2 N}+ \frac{27}{\log^3 N}.
\end{align*}
Where, in the last step, we used 
\begin{equation*}
\int_{\sqrt{\frac{N}{p_1}}}^wh_{p_1}(t) d \log \log t\le \frac{10\log (1+\epsilon_1)}{\log^2 N},
\end{equation*} 
obtained by the change of variables $t=\sqrt{N/p_1}s$, as done in \cite[p. 290]{Nathanson1996}, and 
\begin{equation*}
h_{p_1}(w)=\frac{2}{\log \left(\frac{N}{(1+\epsilon_1)p_1}\right)}\le \frac{3}{\log N}.
\end{equation*}
Therefore, also using \cite[Theorem 5]{RosserSchoenfeld1975},
\begin{align*}
\sum_{z \le p_1 <y}\frac{1}{p_1}&\sum_{y\le p_2<w}\frac{1}{ p_2 \log (N/p_1p_2)}\\ &
\le \sum_{z \le p_1 <y}\frac{H(p_1)}{p_1}+\left(\log \frac{8}{3} +\frac{64}{\log^2 N} \right)\left( \frac{10\log (1+\epsilon_1)}{\log^2 N}+ \frac{27}{\log^3 N}\right).
\end{align*}
We now note that $H(y)=0$ and, upon using the substitution $t=N^{\tau}$ in the definition of $H$, $H(z)\le\frac{0.56}{\log N}$.
Using \cite[Lemma 1 (ii)]{Greaves} and \cite[Theorem 5]{RosserSchoenfeld1975} and $\int_z^y H(u)d\log \log u=\frac{\overline{c}}{\log N}$ (see \cite[p. 291]{Nathanson1996}) we thus have
\begin{align*}
\sum_{z \le p_1 <y}\frac{H(p_1)}{p_1}&<\int_z^y H(u)d\log \log u+ \frac{H(z)}{\log^2 z}\\ & \le \frac{\overline{c}}{\log N} +\frac{36}{\log^3 N}.
\end{align*}
This concludes the proof of the lemma.
\end{proof}

Now let us bound the error term of \eqref{eq:SBP}. By the definition of $r_d^{(j)}$, we have
\begin{equation*}
r_d^{(j)}=\sum_{\substack{p_1p_2p_3\equiv N \pmod d \\ z\le p_1<y\le p_2\le p_3,~ w_j\le p_1<w_j(1+\epsilon_1) \\w_j p_2p_3<N,~ (p_2p_3,N)=1 }} 1-\frac{1}{\varphi (d)} \sum_{\substack{z\le p_1<y\le p_2\le p_3\\w_j\le p_1<w_j(1+\epsilon_1) \\w_j p_2p_3<N,~ (p_2p_3,N)=1 }}1.
\end{equation*}
We now add the condition $(p_1p_2p_3,d)=1$ in the second sum. This is equivalent to $(p_1,d)=1$, since the condition $(p_2p_3,d)=1$ already follows from the fact that $d$ divides $P(y)$. Thus, adding the condition $(p_1p_2p_3,d)=1$ in the second sum makes it decrease by at most 
\begin{equation*}
\frac{1}{\varphi(d)}\sum_{\substack{p_1p_2p_3<(1+\epsilon_1)N\\p_1|d,p_1 \ge z}}1\le \frac{(1+\epsilon_1)N}{\varphi(d)}\sum_{\substack{p_1|d,p_1 \ge z}}\frac{1}{p_1}\le \frac{(1+\epsilon_1)Nw(d)}{z\varphi(d)}.
\end{equation*}
We set $a(n)$ to be the characteristic function of the set of integers $n=p_2p_3$ with $y\le p_2 < p_3$ and $(N,p_2p_3)=1$. Then, by applying Lemma \ref{lem:bil}, we get for some $0 \leq \theta \le 1$
\begin{equation*}
r_d^{(j)}=\sum_{n<X}\sum_{\substack{Z \le p <Y\\np\equiv N \text{ (mod $d$)}}} a(n)-\frac{1}{\varphi (n)}\sum_{n<X}\sum_{\substack{Z\le p<Y \\ (np,d)=1}} a(n)+\frac{(1+\epsilon_1)\theta Nw(d)}{z\varphi(d)},
\end{equation*}
with
\begin{equation*}
X=\frac{N}{w_j}, \quad
Y=\min\left(y,(1+\epsilon_1)w_j\right), \quad
Z=w_j, \quad
a=N.
\end{equation*}
Let us define $D^* = \frac{\sqrt{N}}{\log^{A+1} N} \geq DQ$.
It is easy to see that with the above choices, the conditions \eqref{conditions-B} and $N\geq 4\cdot 10^{18}$, Lemma~\ref{lem:bil} holds. Therefore,
\begin{align*}
R^{(j)} \le & \sum_{\substack{d<D^*\\ d|P(y)}}\left|r_d^{(j)}\right| \\
\le & \sum_{\substack{d<D^*\\ d|P(y)}}\left|\sum_{n<\frac{N}{w_j}}\sum_{\substack{Z \le p <Y\\np\equiv N \text{ (mod $d$)}}} a(n)-\frac{1}{\varphi (n)}\sum_{n<\frac{N}{w_j}}\sum_{\substack{Z\le p<Y\\ (np,d)=1}}a(n)\right|+\sum_{d<D^*}\left(\frac{(1+\epsilon_1)Nw(d)}{z\varphi(d)} \right)
\\ &
\le m(X,Y,D^*)\frac{XY \log^{5/3} D^* \log^{1/3} Y}{Y^{1/6}}+0.762(1+\epsilon_1)N^{\frac{7}{8}}\log^2 N \\&
\le m\left(N^{2/3},(1+\epsilon_1)N^{1/8},\frac{\sqrt{N}}{\log^{A+1}N}\right)\frac{N^{47/48} (1 + \epsilon_1)^{5/6} \log^2 N}{2^{5/3}~ 3^{1/3}} + 0.762(1+\epsilon_1)N^{\frac{7}{8}}\log^2 N,
\end{align*}
for each $j=0, 1, \ldots, j_0$. Since $j_0 \leq \frac{5 \log N}{24 \log(1 + \epsilon_1)}$ by \eqref{eq: bound_j_0}, we get
\begin{align}
R&=\sum_{j=0}^{j_0} R^{(j)} \nonumber \\&
\le 0.046~ m\left(N^{2/3},(1+\epsilon_1)N^{1/8},\frac{\sqrt{N}}{\log^{A+1}N}\right)\frac{N^{47/48} (1 + \epsilon_1)^{5/6} \log^3 N}{\log(1+\epsilon_1)} + 0.159\frac{(1+\epsilon_1)}{\log(1 + \epsilon_1)}N^{\frac{7}{8}}\log^3 N \nonumber \\&
\le \frac{N^{47/48} (1 + \epsilon_1) \log^3 N}{\log(1+\epsilon_1)} \Bigg[
0.046 \cdot m\left(N^{2/3},(1+\epsilon_1)N^{1/8},\frac{\sqrt{N}}{\log^{A+1}N}\right) (1+\epsilon_1)^{-1/6} + 0.159 N^{-\frac{5}{48}} \Bigg]. \label{eq:Rfin}
\end{align}
The desired result now follows from \eqref{eq:SBP}, Lemma~\ref{lemma:Bover} and \eqref{eq:Rfin}.

\subsection{Proof of Theorem \ref{Theo:B.}}
\label{subsection:theo}
We combine Lemma \ref{lem:>} with the bounds obtained in Theorems \ref{thm: lower-S-A}, \ref{thm: upper-S-A-q}, and \ref{theo:B}. We choose the parameters $N \geq X_2 = \exp(\exp(15.85))$, $u = 10^{4.27}$, $\alpha_1 = \alpha_2 = \alpha_3 = 10^{-2.61}$, $A = 4$, $\epsilon_1 = 10^{-20}$, $\epsilon = \frac{1}{8137}$, $C_1 = 112$, and $C_2 = 114$. All computations were implemented in SageMath 9.3.

\subsection{Proof of Corollary \ref{distinctcor}}
\label{subsection:cor}
Let $\pi'_2(N)$ be a number of representations of $N$ as a sum of a prime number and a square-free semi-prime number. Then 
$$\pi'_2(N) \geq \pi_2(N) - \sqrt{N} - 1,$$
where $1$ counts the representation $N = p + 1$, if such exists, and $\sqrt{N}$ bounds the number integers up to $N$ with two equal prime factors.

With the choice of parameters from the previous section we obtain:
$$\pi'_2(N) \geq 4 \cdot 10^{-4} \frac{U_N N}{\log^2 N} - \sqrt{N} - 1 > 0,$$
the last inequality being checked using SageMath 9.3.

\bibliography{refs.bib}

\providecommand{\bysame}{\leavevmode\hbox to3em{\hrulefill}\thinspace}
\providecommand{\MR}{\relax\ifhmode\unskip\space\fi MR }
\providecommand{\MRhref}[2]{%
  \href{http://www.ams.org/mathscinet-getitem?mr=#1}{#2}
}
\providecommand{\href}[2]{#2}
\begin{thebibliography}{10}

\bibitem{BJS22}
M.~Bordignon, D.R. Johnston, and V.~Starichkova, \emph{An explicit version of
  {C}hen's theorem}, arxiv:2207.09452 (2022).

\bibitem{BroadbentSamuelKadiri2021}
S.~Broadbent, H.~Kadiri, A.~Lumley, N.~Ng, and K.~Wilk, \emph{Sharper bounds
  for the {C}hebyshev function {$\theta(x)$}}, Math. Comp. \textbf{90} (2021),
  no.~331, 2281--2315.

\bibitem{Buthe1}
J.~B\"uthe, \emph{A {B}run-{T}itchmarsh inequality for weighted sums over prime
  numbers}, Acta Arith. \textbf{166} (2014), no.~3, 289--299.

\bibitem{Buthe2018}
J.~B\"{u}the, \emph{An analytic method for bounding {$\psi(x)$}}, Math. Comp.
  \textbf{87} (2018), no.~312, 1991--2009.

\bibitem{CH_L_21}
M.~Cully-Hugill and E.~S. Lee, \emph{Explicit interval estimates for prime
  numbers}, Preprint on arXiv:2103.05986 (2021), Submitted.

\bibitem{Davenport}
H.~Davenport, \emph{Multiplicative {N}umber {T}heory}, Graduate Texts in
  Mathematics, vol.~74, Springer-Verlag, New York, 2000, Third Edition.

\bibitem{Des}
J.-M. Deshouillers, G.~Effinger, H.~te~Riele, and D.~Zinoviev, \emph{A complete
  {V}inogradov {$3$}-primes theorem under the {R}iemann hypothesis}, Electron.
  Res. Announc. Amer. Math. Soc. \textbf{3} (1997), 99--104. \MR{1469323}

\bibitem{ErnvallPalojarvi2020}
A.-M. Ernvall-Hyt\"{o}nen and N.~Paloj\"{a}rvi, \emph{Explicit bound for the
  number of primes in arithmetic progressions assuming the generalized
  {R}iemann hypothesis}, Math. Comp. \textbf{91} (2022), no.~335, 1317--1365.

\bibitem{Greaves}
G.~Greaves, \emph{Sieves in {N}umber {T}heory}, Ergebnisse der Mathematik und
  ihrer Grenzgebiete (3) [Results in Mathematics and Related Areas (3)],
  vol.~43, Springer-Verlag, Berlin, 2001. \MR{1836967}

\bibitem{Helf}
H.~Helfgott, \emph{The ternary {G}oldbach problem}, arXiv:1501.05438 (2015).

\bibitem{KadiriLumley2014}
H.~Kadiri and A.~Lumley, \emph{Short effective intervals containing primes},
  Integers \textbf{14} (2014), Paper No. A61, 18.

\bibitem{Kan}
L.~Kaniecki, \emph{On \v{S}nirelman's constant under the {R}iemann hypothesis},
  Acta Arith. \textbf{72} (1995), no.~4, 361--374. \MR{1348203}

\bibitem{Nathanson1996}
M.B. Nathanson, \emph{Additive {N}umber {T}heory}, Graduate Texts in
  Mathematics, vol. 164, Springer-Verlag, New York, 1996, The classical bases.

\bibitem{O_H_P_14}
T.~Oliveira~e Silva, S.~Herzog, and S.~Pardi, \emph{Empirical verification of
  the even {G}oldbach conjecture and computation of prime gaps up to {$4\cdot
  10^{18}$}}, Math. Comp. \textbf{83} (2014), no.~288, 2033--2060.

\bibitem{Guy}
G.~Robin, \emph{Estimation de la fonction de {T}chebychef $\theta$ sur le
  $k$-ième nombre premier et grandes valeurs de la fonction $\omega(n)$ nombre
  de diviseurs premiers de $n$}, Acta Arith. \textbf{42} (1983), no.~4,
  367--389.

\bibitem{RosserSchoenfeld1962}
J.B. Rosser and L.~Schoenfeld, \emph{Approximate formulas for some functions of
  prime numbers}, Illinois Journal of Mathematics \textbf{6} (1962), no.~1,
  64--94.

\bibitem{RosserSchoenfeld1975}
J.B. Rosser and L.~Schoenfeld, \emph{Sharper bounds for the {C}hebyshev
  functions $\theta(x)$ and $\psi(x)$}, Math. Comp. \textbf{29} (1975),
  no.~129, 243--269.

\bibitem{Schoenfeld1976}
L.~Schoenfeld, \emph{Sharper bounds for the {C}hebyshev functions $\theta(x)$
  and $\psi(x)$. {II}}, Math. Comp. \textbf{30} (1976), no.~134, 337--360.

\bibitem{Vin}
I.M. Vinogradov, \emph{Representation of an odd number as a sum of three
  primes}, Dokl.Akad. Nauk. SSR \textbf{15} (1937), 291--294.

\bibitem{Yamada}
T.~Yamada, \emph{Explicit {C}hen's theorem}, https://arxiv.org/abs/1511.03409
  (2015).

\end{thebibliography}
\bibliographystyle{amsplain}
\end{document}